\def\@logofont{\footnotesize}
\def\@setaddresses{\par
  \nobreak \begingroup
  \footnotesize
  \def\author##1{\nobreak\addvspace\bigskipamount}%
  \def\\{\par\nobreak}%
  \interlinepenalty\@M
  \def\address##1##2{\begingroup
    \par\addvspace\bigskipamount\indent
    \@ifnotempty{##1}{(\ignorespaces##1\unskip) }%
    {\scshape\ignorespaces##2}\par\endgroup}%
  \def\curraddr##1##2{\begingroup
    \@ifnotempty{##2}{\nobreak\indent\curraddrname
      \@ifnotempty{##1}{, \ignorespaces##1\unskip}\/:\space
      ##2\par}\endgroup}%
  \def\email##1##2{\begingroup
    \@ifnotempty{##2}{\nobreak\indent\emailaddrname
      \@ifnotempty{##1}{, \ignorespaces##1\unskip}\/:\space
      \ttfamily##2\par}\endgroup}%
  \def\urladdr##1##2{\begingroup
    \def~{\char`\~}%
    \@ifnotempty{##2}{\nobreak\indent\urladdrname
      \@ifnotempty{##1}{, \ignorespaces##1\unskip}\/:\space
      \ttfamily##2\par}\endgroup}%
  \addresses
  \endgroup
}
\renewcommand*\subjclass[2][2010]{%
  \def\@subjclass{#2}%
  \@ifundefined{subjclassname@#1}{%
    \ClassWarning{\@classname}{Unknown edition (#1) of Mathematics
      Subject Classification; using '2000'.}%
  }{%
    \@xp\let\@xp\subjclassname\csname subjclassname@#1\endcsname
  }%
}
\newtheorem{theorem}{Theorem}[section]
\newtheorem*{theorem*}{Theorem}
\newtheorem{proposition}[theorem]{Proposition}
\newtheorem{corollary}[theorem]{Corollary}
\theoremstyle{definition}
\theoremstyle{remark}
\newtheorem{example}[theorem]{Example}
\begin{document}
\title[Structure and Decomposition of Deltoids]{Structure and Decomposition of Deltoids in Abelian Groups}

\author[M. Aliabadi, J. Losonczy]{Mohsen Aliabadi$^{1}$ \and Jozsef Losonczy$^{2,*}$}
\thanks{$^1$Department of Mathematics, Clayton State University, 
2000 Clayton State Boulevard, Lake City, Georgia 92093, USA.  \url{maliabadi@clayton.edu}.\\
$^2$Department of Mathematics, Long Island University,
720 Northern Blvd, Brookville, New York 11548, USA. \url{Jozsef.Losonczy@liu.edu}.}
\thanks{$^*$Corresponding Author.}

\thanks{\textbf{Keywords and phrases.} Admissible set, Defective Chowla set, Dyson $e$-transform,  Hall--Ore Theorem, Partially matchable sets. }
\thanks{\textbf{2020 Mathematics Subject Classification}. Primary: 05C70; Secondary: 11B75; 20K01. }

\begin{abstract}
Deltoids provide a natural framework for studying defective (partial) matchings in abelian groups, and we develop both structure and existence results in this setting. Given finite subsets $A$ and $B$ of an abelian group $G$, a matching is a bijection \(f:A\to B\) such that \(af(a)\notin A\) for all \(a\in A\), a definition motivated by the study of canonical forms for symmetric tensors. We provide necessary and sufficient conditions for the existence of a partial matching with any prescribed defect, and then describe the minimal unavoidable defect for a pair \((A,B)\). We also define and examine a defective version of Chowla sets in the matching context.  We prove a structure theorem identifying obstructions to the existence of partial matchings with small defect. Finally, within the deltoid setup, we establish max--min results on the partitioning of \( A \) and \( B \) into left- and right-admissible sets.  Our tools mix results from transversal theory with ideas from additive number theory.
\end{abstract}
\maketitle

\section{Introduction} \label{Intro}

The study of matchings in groups originated in \cite{Losonczy 1}, motivated by a conjecture of Wakeford on canonical forms for symmetric tensors \cite{Wakeford}. Progress toward a resolution of the conjecture was made by establishing the existence of a specific kind of matching, called an acyclic matching, for certain pairs of subsets of $\mathbb{Z}^n$. This existence property was subsequently shown to hold in a more general sense for $\mathbb{Z}^n$~\cite{Alon}, then extended to torsion-free abelian groups \cite{Losonczy 2}, and ultimately characterized for all abelian groups \cite{Taylor}.  Further developments, encompassing a broader class of matchings, include strengthened results in abelian groups \cite{Aliabadi 0}, an extension to arbitrary (not necessarily abelian) groups \cite{Eliahou 1}, and a lower bound for the number of matchings \cite{Hamidoune}.  The theory has also been generalized to the setting of skew field extensions \cite{Eliahou 2} and to that of matroids \cite{Aliabadi 3}.  A closely related notion of matching has found application in combinatorial number theory \cite{Lev}.

Let \(G\) be an abelian group, with operation written multiplicatively. For nonempty finite sets \(A,B\subseteq G\), a \emph{matching} is a bijection \(f:A\to B\) such that \(af(a)\notin A\) for all \(a\in A\). The present authors have recently characterized matchable pairs \( (A,B) \) in abelian groups using Dyson's $e$-transform \cite{Aliabadi 5} and have obtained structure and existence theorems for pairs that are not matchable \cite{Aliabadi 6}. These results have helped to clarify the role played by periodic sets in the theory.  We continue this line of work here by investigating partially matchable pairs in abelian groups.

The paper is organized as follows. Section~\ref{Prelims} introduces a deltoid framework for partial matchings and recalls some relevant results from transversal theory. In Section~\ref{PMG}, we investigate how the existence of partial matchings of \((A,B)\) relates to the arithmetic characteristics of \(A\) and \(B\) (Theorem~\ref{Char for partial in groups} and Corollary~\ref{arithmetic with defect}).  Our analysis again leverages Dyson’s \(e\)-transform, which provides a link between combinatorial matching conditions and these arithmetic features. In Section~\ref{Structure section for partial}, a decomposition theorem (Theorem~\ref{Structure of PMG}) and a result revealing an obstruction to the existence of pairs \( (A,B) \) of prescribed size and deficiency (Theorem~\ref{Existence partial}) are presented. We study the problem of partitioning \(A\) and \(B\) into left- and right-admissible subsets in Section~\ref{Decompose section groups}.  Concluding remarks and directions for future work are offered in the final section.

\section{Preliminaries} \label{Prelims}

Let \( A \) and \( B \) be nonempty finite sets, and let \( \Delta \) be a subset of \( A \times B \). Following \cite{Mirsky}, we call the triple \( (A, \Delta, B) \) a {\em deltoid}. Note that \( A, B \) are not assumed to be disjoint.  We say that a subset \( A' \) of \( A \) is {\em left-admissible} if there exists an injective mapping \( f: A' \longrightarrow B \) such that \( (a, f(a)) \in \Delta \) for all \( a \in A' \).  We then also say that the set \( B' = f(A') \) is {\em right-admissible}, and that \( f \) is a {\em partial matching} of \( (A,\Delta,B) \) with {\em defect}  \( d = |A \setminus A'| \).  

We remark that the empty set is both left- and right-admissible, and the empty mapping (from \( \emptyset \) to \( B \)) is a partial matching of \( (A, \Delta, B) \).  A surjective partial matching of \( (A,\Delta,B) \) with defect \( 0 \), if it exists, is usually referred to simply as a {\em matching}. 

For \( S \subseteq A \), we define a subset \( \Delta(S) \) of \( B \) by
\[ 
\Delta(S) = \{ b \in B :  (a,b) \in \Delta \mbox{ for some }a \in S \}. 
\]

In this paper, we will use the following three well-known results from matching theory. The reader is referred to the monograph \cite{Mirsky} for proofs.\footnote{To any deltoid \( (A, \Delta, B) \), one can associate a family \( \mathcal{A} = (S_a : a \in A ) \) of subsets of \( B \) by setting \( S_a = \Delta(\{ a \}) \). A subset \( B' \) of \( B \) is called a {\it partial transversal} of \( \mathcal{A} \) if there is an injective mapping \( g: B' \longrightarrow A \) such that \( b \in S_{g(b)} \) for all \( b \in B' \). We then say that \( B' \) has {\it defect} \( d = |A\setminus g(B')| \). Theorems~\ref{Ore}, \ref{Partition A}, and \ref{Partition B} above can be obtained by applying Theorems~3.2.1, 3.3.2, and 3.3.5 in \cite{Mirsky} to the family \( \mathcal{A} \) and then translating back to the deltoid setting.}  It is assumed in these theorems that the sets \( A \) and \( B \) are nonempty and finite.  The first result is sometimes called the ``defect form" of P.\ Hall's marriage theorem.

\begin{theorem}[Hall--Ore]  \label{Ore}
Let \( (A,\Delta,B) \) be a deltoid and let \( 0 \leq d \leq |A| \) be an integer. Then there exists a partial matching of \( (A,\Delta,B) \) with defect \( d \) if and only if, for every subset \( S \) of \( A \), we have 
\[ 
|S| - d \leq | \Delta(S)|. 
\]
\end{theorem}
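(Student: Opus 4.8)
The plan is to treat the two implications separately, disposing of the easy counting direction first and then reducing the substantive direction to the classical (defect-zero) marriage theorem of P.\ Hall, which is the case \( d = 0 \) and which I take as the foundational result.

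For necessity, suppose \( f : A' \to B \) is a partial matching with defect \( d = |A \setminus A'| \). Fix \( S \subseteq A \). Every \( a \in S \cap A' \) satisfies \( f(a) \in \Delta(\{a\}) \subseteq \Delta(S) \), and \( f \) is injective, so restricting \( f \) to \( S \cap A' \) embeds \( S \cap A' \) into \( \Delta(S) \); hence \( |S \cap A'| \leq |\Delta(S)| \). Since \( |S \cap A'| \geq |S| - |A \setminus A'| = |S| - d \), the inequality \( |S| - d \leq |\Delta(S)| \) follows. This step is routine.

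For sufficiency, assume \( |S| - d \leq |\Delta(S)| \) for every \( S \subseteq A \). I would enlarge the right-hand vertex set by adjoining a set \( D \) of \( d \) new \emph{dummy} points disjoint from \( A \cup B \), forming the deltoid \( (A, \Delta', B \cup D) \) with \( \Delta' = \Delta \cup (A \times D) \), so that every \( a \in A \) is related to every point of \( D \). For nonempty \( S \subseteq A \) one has \( \Delta'(S) = \Delta(S) \cup D \), and therefore \( |\Delta'(S)| = |\Delta(S)| + d \geq (|S| - d) + d = |S| \); the case \( S = \emptyset \) is trivial. Thus the augmented deltoid satisfies the defect-zero Hall condition, and the classical marriage theorem yields an injection \( g : A \to B \cup D \) with \( (a, g(a)) \in \Delta' \) for all \( a \). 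Restricting to \( A' = \{ a \in A : g(a) \in B \} \) produces an injective \( f = g|_{A'} : A' \to B \) with \( (a, f(a)) \in \Delta \), that is, a partial matching. Because \( g \) is injective and \( |D| = d \), at most \( d \) elements of \( A \) are sent into \( D \), so the defect \( |A \setminus A'| \) is at most \( d \).

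The main obstacle is precisely this sufficiency direction, and the two delicate points are already visible: verifying Hall's condition for the augmented deltoid on all \( S \) (the boundary case \( S = \emptyset \) must be checked), and reconciling the \emph{at most} \( d \) produced above with the \emph{exactly} \( d \) demanded by the statement. For the latter I would trim: if the constructed partial matching has defect \( d' < d \), then since \( |A'| = |A| - d' \geq |A| - d \geq 0 \), I may delete any \( d - d' \) elements from the domain \( A' \), which preserves both injectivity and membership in \( \Delta \) and raises the defect to exactly \( d \). Finally I would note that invoking classical Hall here is not circular, since that theorem is exactly the \( d = 0 \) instance taken as a black box.
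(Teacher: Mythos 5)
Your proof is correct, but note that the paper itself contains no proof of Theorem~\ref{Ore}: it quotes the result as known from transversal theory, with a footnote explaining how to translate the deltoid \( (A,\Delta,B) \) into a family of sets \( S_a = \Delta(\{a\}) \) and then invoke Theorem~3.2.1 of Mirsky's \emph{Transversal Theory}. So the relevant comparison is with that citation rather than with an in-paper argument. Your route is the standard self-contained reduction of the defect form to the defect-zero marriage theorem: adjoin a set \( D \) of \( d \) dummy vertices adjacent to all of \( A \), check Hall's condition for the augmented deltoid (which works because \( D \) is disjoint from \( B \), so \( |\Delta'(S)| = |\Delta(S)| + d \) for nonempty \( S \)), apply classical Hall to get an injection \( g : A \to B \cup D \), and observe that injectivity forces at most \( d \) elements of \( A \) into \( D \). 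Together with your counting argument for necessity, this is a complete and valid proof; what it buys over the paper's approach is that the only black box is the \( d=0 \) case, rather than a deficiency theorem from a monograph.

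One cosmetic slip in the trimming step: to delete \( d - d' \) elements from \( A' \) you need \( |A'| \geq d - d' \), which follows from \( |A'| = |A| - d' \) together with the hypothesis \( d \leq |A| \); the chain you actually wrote, \( |A'| = |A| - d' \geq |A| - d \geq 0 \), is true but is not the inequality that licenses the deletion. Since the needed inequality is immediate from the same hypotheses, this is a matter of wording, not a gap.
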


The next theorem gives a simple necessary and sufficient condition for the existence of a decomposition of \( A \) into pairwise disjoint left-admissible sets. We will use this result in Section~\ref{Decompose section groups}.

\begin{theorem}  \label{Partition A}
Let \( (A,\Delta,B) \) be a deltoid. Let \( k \) be a positive integer. Then \( A \) can be written as a union of \( k \) pairwise disjoint left-admissible sets if and only if, for every subset \( S \) of \( A \), we have 
\[ 
|S| \leq  k| \Delta(S)|. 
\]
\end{theorem}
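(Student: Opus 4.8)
The plan is to reduce the statement to the defect form of Hall's theorem (Theorem~\ref{Ore}) by passing to a $k$-fold ``blow-up'' of the deltoid on the $B$-side. Concretely, I would introduce the auxiliary deltoid $(A, \Delta', B^{*})$, where $B^{*} = B \times \{1, \dots, k\}$ and $(a, (b,j)) \in \Delta'$ precisely when $(a,b) \in \Delta$. The point of this construction is that a decomposition of $A$ into $k$ pairwise disjoint left-admissible sets should correspond exactly to a single injective assignment $A \hookrightarrow B^{*}$ compatible with $\Delta'$, with the second coordinate recording which of the $k$ classes a given element belongs to.

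The first step is to make this correspondence precise and verify it in both directions. Given a partition $A = A_1 \sqcup \dots \sqcup A_k$ into left-admissible sets with witnessing injections $f_i : A_i \to B$, I would define $g : A \to B^{*}$ by $g(a) = (f_i(a), i)$ for $a \in A_i$; injectivity of $g$ follows because agreement of the second coordinates forces two elements into the same class $A_i$, where injectivity of $f_i$ applies, and $(a, g(a)) \in \Delta'$ holds by construction. Conversely, from an injection $g : A \to B^{*}$ with $(a, g(a)) \in \Delta'$, writing $g(a) = (b_a, j_a)$, I would set $A_i = \{ a \in A : j_a = i \}$ and take $f_i$ to be the first-coordinate map on $A_i$; the sets $A_i$ are pairwise disjoint with union $A$, and each $f_i$ is injective since a coincidence $f_i(a) = f_i(a')$ together with $j_a = j_{a'} = i$ would force $g(a) = g(a')$.

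The second step is to recognize an injection $A \hookrightarrow B^{*}$ compatible with $\Delta'$ as exactly a partial matching of $(A, \Delta', B^{*})$ with defect $0$, and then apply Theorem~\ref{Ore} with $d = 0$ to the blow-up. This yields the equivalence of the desired decomposition with the inequality $|S| \le |\Delta'(S)|$ holding for every $S \subseteq A$. It then remains only to compute $\Delta'(S)$: since $(a,(b,j)) \in \Delta'$ iff $(a,b) \in \Delta$, independently of $j$, we have $\Delta'(S) = \Delta(S) \times \{1, \dots, k\}$ and hence $|\Delta'(S)| = k\,|\Delta(S)|$. Substituting this into the Hall--Ore condition gives precisely the inequality $|S| \le k\,|\Delta(S)|$ of the theorem.

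I expect the only genuinely delicate point to be the bookkeeping in the correspondence of the first step, in particular confirming that injectivity is preserved in both directions and that empty classes cause no difficulty (the empty set is left-admissible, so partitions in which fewer than $k$ of the parts are nonempty are permitted). Once the blow-up is in place, the remaining work is the routine identity $|\Delta'(S)| = k\,|\Delta(S)|$ together with a direct appeal to Theorem~\ref{Ore}, so no separate extremal or counting argument should be needed.
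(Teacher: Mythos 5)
Your proof is correct. Note that the paper does not actually prove Theorem~\ref{Partition A}: it cites Mirsky's \emph{Transversal Theory} (Theorem~3.3.2 there), translated to the deltoid setting via the associated family of sets \( S_a = \Delta(\{a\}) \) described in the footnote. Your argument is therefore a genuine addition rather than a restatement --- it is the classical self-contained reduction of the ``decomposition into \( k \) admissible sets'' theorem to the defect form of Hall's theorem, obtained by blowing up the \( B \)-side to \( B^{*} = B \times \{1,\dots,k\} \). All the steps check out: the bijective correspondence between partitions \( A = A_1 \sqcup \dots \sqcup A_k \) with witnessing injections \( f_i \) and defect-\( 0 \) partial matchings \( g : A \to B^{*} \) of the blow-up works in both directions exactly as you describe (with empty parts harmless, since the empty set is left-admissible); \( B^{*} \) is nonempty and finite, so Theorem~\ref{Ore} applies to the deltoid \( (A,\Delta',B^{*}) \); and the identity \( \Delta'(S) = \Delta(S) \times \{1,\dots,k\} \), hence \( |\Delta'(S)| = k\,|\Delta(S)| \), converts the Hall--Ore condition into the stated inequality. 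What your route buys is independence from the external reference and a uniform mechanism (the same blow-up idea, applied with a defect on the enlarged deltoid, also yields Theorem~\ref{Partition B}); what the paper's route buys is brevity, since all three transversal-theoretic inputs are imported wholesale from Mirsky.
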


The criterion for the decomposition of \( B \) stated below has a slightly more complicated form, but it will prove to be easy to work with for the deltoids considered in this paper.

\begin{theorem}  \label{Partition B}
Let \( (A,\Delta,B) \) be a deltoid.  Let \( k \) be a positive integer. Then \( B \) can be written as a union of \( k \) pairwise disjoint right-admissible sets if and only if, for every subset \( S \) of \( A \), we have 
\[ 
k|S| + |B| \leq  k|A| + |\Delta(S)|. 
\]
\end{theorem}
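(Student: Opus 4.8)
The plan is to reduce Theorem~\ref{Partition B} to Theorem~\ref{Partition A} by a duality argument. The key observation is that decomposing \( B \) into \( k \) right-admissible sets is essentially the same problem as decomposing a partial-transversal family, so I would pass to the associated family \( \mathcal{A} = (S_a : a \in A) \) with \( S_a = \Delta(\{a\}) \), as described in the footnote, and then interpret right-admissible subsets of \( B \) as partial transversals. More precisely, a subset \( B' \subseteq B \) is right-admissible for \( (A,\Delta,B) \) exactly when it is a partial transversal of \( \mathcal{A} \). Thus I want to apply the transversal-theoretic decomposition criterion (Theorem~3.3.5 of \cite{Mirsky}) and translate the resulting numerical condition back into the deltoid language.

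First I would set up the reverse deltoid \( (B, \Delta^{-1}, A) \), where \( \Delta^{-1} = \{ (b,a) : (a,b) \in \Delta \} \), and observe that right-admissible subsets of \( B \) in \( (A,\Delta,B) \) are precisely the left-admissible subsets of \( B \) in \( (B,\Delta^{-1},A) \). Then \( B \) decomposes into \( k \) pairwise disjoint right-admissible sets (in the original deltoid) if and only if \( B \) decomposes into \( k \) pairwise disjoint left-admissible sets in the reverse deltoid. Applying Theorem~\ref{Partition A} to \( (B,\Delta^{-1},A) \) gives the criterion: for every subset \( T \subseteq B \),
\[
|T| \leq k\, |\Delta^{-1}(T)|,
\]
where \( \Delta^{-1}(T) = \{ a \in A : (a,b) \in \Delta \text{ for some } b \in T \} \). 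The remaining work is purely combinatorial bookkeeping: I must show that this family of inequalities (indexed by \( T \subseteq B \)) is equivalent to the family in the statement (indexed by \( S \subseteq A \)).

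The translation step is where I expect the main obstacle to lie, since the two index sets are different and the inequalities must be matched up via a complementation argument. The natural bridge is the correspondence \( S \mapsto B \setminus \Delta(S) \): given \( S \subseteq A \), the set \( T = B \setminus \Delta(S) \) consists of those \( b \in B \) that are \emph{not} \( \Delta \)-related to any element of \( S \), so \( \Delta^{-1}(T) \subseteq A \setminus S \), whence \( |\Delta^{-1}(T)| \leq |A| - |S| \) and \( |T| = |B| - |\Delta(S)| \). Feeding these into \( |T| \leq k|\Delta^{-1}(T)| \) yields \( |B| - |\Delta(S)| \leq k(|A| - |S|) \), which rearranges to the desired \( k|S| + |B| \leq k|A| + |\Delta(S)| \). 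The delicate direction is the converse: I must verify that the worst case of the reverse-deltoid inequality is always attained by a set \( T \) of the form \( B \setminus \Delta(S) \), i.e. that it suffices to check sets \( T \) whose complement is a \( \Delta \)-image. This is the crux, and I would argue it by a saturation or extremality argument---enlarging an arbitrary \( T \) to \( B \setminus \Delta(A \setminus \Delta^{-1}(T)) \) can only decrease the left side relative to the right---so that the \( A \)-indexed inequalities already imply all \( B \)-indexed inequalities. Once this equivalence of the two inequality systems is established, the theorem follows immediately from Theorem~\ref{Partition A} applied to the reverse deltoid.
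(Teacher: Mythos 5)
Your argument is correct, but it takes a genuinely different route from the paper, which offers no proof of its own for Theorem~\ref{Partition B}: the paper simply invokes Theorem~3.3.5 of \cite{Mirsky} applied to the associated set family \( \mathcal{A}=(S_a : a\in A) \) (see the footnote in Section~\ref{Prelims}). You instead derive Theorem~\ref{Partition B} from Theorem~\ref{Partition A} by passing to the reverse deltoid \( (B,\Delta^{-1},A) \), identifying right-admissible subsets of \( B \) with left-admissible subsets in the reverse deltoid, and then showing the two inequality systems are equivalent under complementation. This all checks out, and the step you flag as the crux is in fact easier than you suggest: given an arbitrary \( T\subseteq B \), set \( S=A\setminus\Delta^{-1}(T) \); then \( \Delta(S)\subseteq B\setminus T \), so \( |\Delta(S)|\le |B|-|T| \), while \( |S|=|A|-|\Delta^{-1}(T)| \), and substituting into \( k|S|+|B|\le k|A|+|\Delta(S)| \) yields \( |T|\le k\,|\Delta^{-1}(T)| \) directly --- no saturation argument is required (though your enlargement \( T\mapsto B\setminus\Delta(A\setminus\Delta^{-1}(T)) \) also works, since it fixes the \( \Delta^{-1} \)-image while only growing \( T \)). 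The forward translation via \( S\mapsto B\setminus\Delta(S) \) is exactly as you describe. What your approach buys is a self-contained deduction of one decomposition criterion from the other, so that only Theorem~\ref{Partition A} need be imported from \cite{Mirsky}; the cost is negligible, since every step is elementary counting.
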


We are interested in studying the partial matchings and admissible sets of certain algebraically defined deltoids.
Let \( G \) be an abelian group (its operation written multiplicatively).  Given subsets \( S, T \subseteq G \), we define \( ST \) by
\[
ST = \{ u \in G : u = st \mbox{ for some }s \in S\mbox{ and }t \in T \}.
\]
Clearly, if at least one of the sets \( S, T \) is empty, then so is \( ST \).  In the case where \( S \) is a singleton, say \( S = \{ x \} \), we will also write \( xT \) for \( ST \), and likewise if \( T \) is a singleton.

Let \( A \) and \( B \) be nonempty finite subsets of \( G \), with \( |A| = |B| \).  
We form a deltoid \( (A, \Delta, B) \) by defining 
\[  
\Delta = \{ (a,b) \in A \times B : ab \notin A \}. 
\]
Henceforth, our deltoids will be as above. Thus a partial matching of \( (A,\Delta,B) \) with defect \( d \) is an injective mapping \( f : A' \longrightarrow B \), where \( A' \subseteq A \) and \( d = |A \setminus A'| \), such that \( af(a) \notin A \) for all \( a \in A' \). We will often omit mention of \( \Delta \) and refer to such a mapping as a ``partial matching of \( (A,B) \) with defect \( d \)." When there exists a partial matching with defect 0, i.e., a matching, we say that the pair \( (A,B) \) is {\em matchable}.  Otherwise, we call \( (A,B) \) {\em unmatchable}.  

Note that, with the above (permanent) assumption about \( \Delta \), we have, for each \( S \subseteq A\), 
\[ \Delta(S) = \{ b \in B : Sb \not\subseteq A \}. \]

\section{Characterization of partially matched pairs}\label{PMG}

We wish to explore the connections between the partial matchings of \( (A,B) \) and the arithmetic structure of \( A \) and \( B \). Unfortunately, Theorem~\ref{Ore} is not, on its own, a convenient tool for this purpose.  Previous work in this area has relied heavily on supplementary ideas, mostly in the form of certain inequalities from additive number theory. To get around this, we will derive a more specialized version of Theorem~\ref {Ore}.  This will be accomplished through the use of Dyson's \( e \)-transform.  The proposition below (or rather, its proof) explains exactly how the \( e \)-transform will be used.  A general discussion of this tool is given in \cite{Nathanson}.

\begin{proposition} \label{e-transform for groups}
Let \( A \), \( S \), and  \( R \) be nonempty finite subsets of an abelian group \( G \). Assume that \( 1 \in R \) and \( SR \subseteq A \). Then there exist sets \( S' \) and \( R' \) such that the following three conditions hold:
\begin{itemize}
\item[(i)]  \( S \subseteq S'R' = S' \subseteq A \),
\item[(ii)]  \( 1 \in R' \subseteq R \),
\item[(iii)] \( |S'| + |R'| = |S| + |R| \).
\end{itemize}
\end{proposition}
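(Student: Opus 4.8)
The plan is to prove the proposition by iterating Dyson's $e$-transform, starting from $(S,R)$ and gradually enlarging the first coordinate until it becomes stable under multiplication by the second. Throughout, I would maintain the following invariant on a pair $(S_i,R_i)$: $S\subseteq S_i\subseteq A$, $1\in R_i\subseteq R$, the product containment $S_iR_i\subseteq A$, and the conservation law $|S_i|+|R_i|=|S|+|R|$. The hypotheses already supply the base case $(S_0,R_0)=(S,R)$: since $1\in R$ we have $S=S\cdot 1\subseteq SR\subseteq A$, so all four conditions hold.

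For the transform step, suppose $S_iR_i\neq S_i$. Since $1\in R_i$ forces $S_i\subseteq S_iR_i$, there is an element $x\in S_iR_i\setminus S_i$; writing $x=er$ with $e\in S_i$ and $r\in R_i$, we get $er\notin S_i$, so $eR_i\not\subseteq S_i$. With this $e$ I would set
\[
S_{i+1}=S_i\cup eR_i,\qquad R_{i+1}=R_i\cap e^{-1}S_i .
\]
I would then verify that the invariant persists. That $1\in R_{i+1}\subseteq R_i$ is immediate from $e\in S_i$; that $S_i\subseteq S_{i+1}\subseteq A$ follows from $eR_i\subseteq S_iR_i\subseteq A$; and the conservation law $|S_{i+1}|+|R_{i+1}|=|S_i|+|R_i|$ is the standard $e$-transform count, using $|R_i\cap e^{-1}S_i|=|eR_i\cap S_i|$ together with inclusion–exclusion.

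The crux, and the step I expect to be the main obstacle, is checking that the product containment $S_{i+1}R_{i+1}\subseteq A$ survives the transform. Here I would split $S_{i+1}R_{i+1}=S_iR_{i+1}\cup(eR_i)R_{i+1}$. The first piece lies in $S_iR_i\subseteq A$ because $R_{i+1}\subseteq R_i$. For the second piece the idea is to rebracket: since $R_{i+1}\subseteq e^{-1}S_i$ gives $eR_{i+1}\subseteq S_i$, one has $(eR_i)R_{i+1}=R_i(eR_{i+1})\subseteq R_iS_i=S_iR_i\subseteq A$. Getting the directions of the factors and inverses right here, and using commutativity for the rebracketing, is the delicate point.

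Finally, for termination and the conclusion: because $eR_i\not\subseteq S_i$ by the choice of $e$, the transform strictly increases the size, $|S_{i+1}|>|S_i|$. As $|S_i|\leq|A|$ is bounded, the process must halt after finitely many steps, and it can only halt at a pair with $S_iR_i=S_i$. Taking $(S',R')$ to be this terminal pair yields $S\subseteq S'R'=S'\subseteq A$, $1\in R'\subseteq R$, and $|S'|+|R'|=|S|+|R|$, which are precisely conditions (i)--(iii). (Equivalently, one could phrase the whole argument as choosing, among all pairs satisfying the invariant, one with $|S'|$ maximal, and then ruling out $S'R'\neq S'$ by a single application of the transform.)
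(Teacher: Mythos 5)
Your proposal is correct and follows essentially the same route as the paper: iterate Dyson's $e$-transform $S\mapsto S\cup(eR)$, $R\mapsto R\cap(e^{-1}S)$, preserve the invariants (containments, $1\in R_i$, and the cardinality conservation via the bijection $x\mapsto ex$), and terminate because $|S_i|$ strictly increases and is bounded by $|A|$. The only cosmetic difference is that you verify the product containment $S_{i+1}R_{i+1}\subseteq A$ by an explicit rebracketing argument, whereas the paper records the slightly sharper containment $S_1R_1\subseteq SR$ and treats it as immediate.
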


\begin{proof}
First observe that if \( SR = S \), we can simply take \( S' = S \) and \( R' = R \).  Assume that \( SR \neq S \). Since  \( 1 \in R \), we have \( SR \not\subseteq S \), and so there exist \( e \in S \) and \( r \in R \) such that \( er \notin S \).  We define sets \( S_1 \) and \( R_1 \) as follows:
\begin{align*}
 S_1 &= S \cup (eR), \\
R_1 &= R \cap (Se^{-1}).
\end{align*}
The sets \( S_1 \) and \( R_1 \) are known as the \( e \)-transforms of \( S \) and \( R \).

We claim that the four conditions below hold:
\begin{itemize}
\item[(a)] \( S_1R_1 \subseteq SR \subseteq A \),
\item[(b)] \( 1 \in R_1 \subseteq R \),
\item[(c)] \( |S_1| + |R_1| = |S| + |R| \),
\item[(d)] \( S \subseteq S_1 \subseteq A \mbox{ and }|S| < |S_1|. \)
\end{itemize}
Conditions (a) and (b), as well as the inclusion \( S \subseteq S_1 \) in (d), are immediate. The other inclusion in (d) follows from (a) and (b) (specifically, \( S_1R_1 \subseteq A \) and \( 1 \in R_1 \)).  The rest of (d) is a consequence of the fact that \( S \subseteq S_1 \) and \( er \in S_1 \setminus S \).  To see that (c) holds, we first compute 
\begin{align*}
|S_1| &= |S \cup (eR)| \\
&= |S| + |eR| - |S \cap (eR)| \\
&= |S| + |R| - |S \cap (eR)|,
\end{align*}
and then observe that the mapping 
\[ R_1 \longrightarrow S \cap (eR) \]
\[ x \mapsto ex \]
is a bijection. 

If \( S_1R_1 \neq S_1 \), the above is repeated, this time with  \( S_1 \) and \( R_1 \) replacing \( S \) and \( R \), respectively.  We continue the procedure until we reach nonempty sets \( S_n \) and \( R_n \) satisfying \( S_nR_n = S_n \). This is guaranteed to occur because the sequence \( |S_1|, |S_2|, \ldots \) is strictly increasing and the set \( A \) is finite. Let \( S' = S_n \) and \( R' = R_n \). It is clear that these sets satisfy the conditions in the statement. 
\end{proof}

The following result establishes the equivalence of a pair of conditions involving real parameters \( \alpha \) and \( \beta \).  When these parameters are assigned certain values, the conditions become criteria for the existence of partial matchings.  For certain other values, we obtain tests for the existence of a partition of \( B \) into admissible sets.

\begin{theorem}  \label{Equivalence for groups}
Let \( A \) and \( B \) be nonempty finite subsets of an abelian group \( G \), with \( |A| = |B| \) and \( 1 \notin B \). Let \( \alpha \geq 1 \) and \( \beta \) be real numbers. Then the following two conditions are equivalent:
\begin{itemize}
\item[(i)] For every subset \( S \) of \( A \), we have \( \alpha |S| + \beta \leq | \Delta(S)| \), where \( \Delta(S) = \{ b \in B : Sb \not\subseteq A \} \).
\item[(ii)] For every pair of subsets \( S \subseteq A \) and \( R \subseteq B \cup \{ 1 \} \) with \( SR = S \), we have \( \alpha |S| + \beta \leq |B \setminus R| \).
\end{itemize}
\end{theorem}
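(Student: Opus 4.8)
The plan is to prove the two implications separately, with the forward direction (i) $\Rightarrow$ (ii) being elementary and the reverse direction (ii) $\Rightarrow$ (i) carrying the real content through Proposition~\ref{e-transform for groups}.

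For (i) $\Rightarrow$ (ii), I would fix a pair $S \subseteq A$, $R \subseteq B \cup \{1\}$ with $SR = S$ and simply compare $R$ with $\Delta(S)$. The key observation is that every $r \in R \cap B$ satisfies $Sr \subseteq SR = S \subseteq A$, so $r \notin \Delta(S)$; hence $R \cap B \subseteq B \setminus \Delta(S)$. Since $1 \notin B$, this gives $|B \setminus R| = |B| - |R \cap B| \geq |B| - (|B| - |\Delta(S)|) = |\Delta(S)|$, and combining with (i) yields $\alpha|S| + \beta \leq |\Delta(S)| \leq |B \setminus R|$. This argument needs no restriction on $\alpha$ and covers the case $S = \emptyset$ as well.

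For the converse, the obstacle is that the natural candidate for $R$, namely $R = (B \setminus \Delta(S)) \cup \{1\} = \{b \in B : Sb \subseteq A\} \cup \{1\}$, satisfies $SR \subseteq A$ and $1 \in R$, but in general \emph{not} the invariance condition $SR = S$ that is required to invoke (ii). This is exactly the gap that the $e$-transform closes. Assuming $S \neq \emptyset$ (the case $S = \emptyset$ gives $\Delta(S) = \emptyset$ and follows by taking $R = B \cup \{1\}$ in (ii), which forces $\beta \leq 0$), I would apply Proposition~\ref{e-transform for groups} to the triple $(A, S, R)$ to obtain sets $S'$ and $R'$ with $S \subseteq S'R' = S' \subseteq A$, $1 \in R' \subseteq R$, and $|S'| + |R'| = |S| + |R|$. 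Then $(S', R')$ is an admissible pair for condition (ii), since $S'R' = S'$ and $R' \subseteq B \cup \{1\}$.

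The final step is the cardinality bookkeeping. Since $1 \in R' \subseteq B \cup \{1\}$ and $1 \notin B$, we have $|B \setminus R'| = |B| - (|R'| - 1)$; substituting $|R'| = |S| + |R| - |S'|$ together with $|R| = |B| - |\Delta(S)| + 1$ collapses this to $|B \setminus R'| = |\Delta(S)| + |S'| - |S|$. Feeding this into (ii) gives $\alpha|S'| + \beta \leq |\Delta(S)| + |S'| - |S|$, that is, $(\alpha - 1)|S'| + |S| + \beta \leq |\Delta(S)|$. Here the hypothesis $\alpha \geq 1$ is essential: because $S \subseteq S'$ forces $|S| \leq |S'|$ while $\alpha - 1 \geq 0$, one may replace $|S'|$ by the smaller $|S|$ in the term $(\alpha-1)|S'|$ without increasing the left-hand side, obtaining $(\alpha-1)|S| + |S| + \beta = \alpha|S| + \beta \leq |\Delta(S)|$, which is precisely (i). I expect this last monotonicity step to be the main subtlety: the $e$-transform unavoidably replaces $S$ by the enlarged invariant set $S'$, and it is only the sign condition $\alpha - 1 \geq 0$ that permits transferring the bound back down to $S$.
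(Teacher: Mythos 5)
Your proposal is correct and follows essentially the same route as the paper: the forward direction by comparing $R$ with $U_S=B\setminus\Delta(S)$, and the reverse direction by applying Proposition~\ref{e-transform for groups} to $R=U_S\cup\{1\}$ and then using $|S|\le|S'|$ together with $\alpha\ge 1$ to transfer the inequality back from $S'$ to $S$. The only cosmetic difference is that the paper splits the converse into the cases $SR=S$ and $SR\neq S$ before invoking the $e$-transform, whereas you let the proposition absorb the first case; the bookkeeping is the same.
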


\begin{proof}
Assume that (i) holds.  Let \( S \) and \( R \) be sets such that \( S \subseteq A \), \( R \subseteq B \cup \{ 1 \} \), and \( SR = S \).  Let \( U_S = \{ b \in B : Sb \subseteq A \} \) and note that \( \Delta(S) = B \setminus U_S \).  We also have \( R \subseteq U_S \cup \{ 1 \} \) (for if \( x \in R \) and \( x \neq 1 \), then \( x \in B \) and \( Sx \subseteq SR = S \subseteq A \), so \( x \in U_S \)). Hence \( B \setminus (U_S \cup \{ 1 \}) \subseteq B \setminus R \).  Since \( 1 \notin B \), this last inclusion can be rewritten as \( B \setminus U_S \subseteq B \setminus R \), which gives us \( |B \setminus U_S| \leq |B \setminus R| \).  Combining this with the assumption that \( \alpha |S| + \beta \leq |\Delta(S)| = |B \setminus U_S| \), we obtain \( \alpha |S| + \beta \leq |B \setminus R| \), as desired.

Conversely, assume that (ii) holds. Let \( S \) be a subset of \( A \), and let \( U_S \) be defined as above. Put \( R = U_S \cup \{ 1 \} \).  Our argument splits into two cases.

\medskip

\textbf{Case 1:} \( SR = S \). The hypothesis can be applied to \( S \) and \( R \), yielding \( \alpha |S| + \beta \leq |B \setminus R| \).  Since \( 1 \notin B \), we have \( B \setminus R = B \setminus U_S \), and therefore 
\( \alpha |S| + \beta \leq |B \setminus U_S| = |\Delta(S)| \). 

\medskip

{\bf Case 2:} \( SR \neq S\). Note that, under this assumption, \( S \) must be nonempty. We use Dyson's \( e \)-transform, via Proposition~\ref{e-transform for groups}, to obtain sets \( S' \) and \( R' \) such that 
\begin{itemize}
\item[(a)] \( S \subseteq S'R' = S' \subseteq A \), 
\item[(b)] \( 1 \in R' \subseteq R \subseteq B \cup \{ 1 \} \), 
\item[(c)] \( |S'| + |R'| = |S| + |R| \).
\end{itemize}
Observe that our hypothesis can be applied to the sets \( S' \) and \( R' \). This gives us \( \alpha |S'| + \beta \leq |B \setminus R'| \).  Again using the fact that \( 1 \notin B \), we rewrite this inequality as 
\[ 
\alpha |S'| + \beta \leq |B \setminus (R'\setminus \{ 1 \})|.
\]
By (b), this implies 
\[ 
\alpha |S'| + \beta \leq |B| - |R'| + 1,
\]
which we can rewrite as 
\begin{equation} 
\alpha |S'| + |R'| + \beta \leq |B| + 1. \label{Case2 Ineq}
\end{equation}
Now, from conditions (b) and (c), we know that 
\[
0 \leq |R| - |R'| = |S'| - |S|.
\]
Since \( \alpha \geq 1 \), this can be extended to
\[ 
0 \leq |R| - |R'| \leq |S'| - |S| \leq \alpha (|S'| - |S|), 
\]
which implies
\[
\alpha |S| + |R| \leq \alpha |S'| + |R'|.
\]
Adding \( \beta \) to each side and then combining the result with (\ref{Case2 Ineq}), we get
\[
\alpha |S| + |R| + \beta \leq |B| + 1,
\]
or
\[
\alpha |S| + \beta \leq |B| - |R| + 1.
\]
Finally, since \( U_S \subseteq B \) and  \( |U_S| = |R| -1 \), it follows that
\[
\alpha |S| + \beta \leq |B| - |U_S| = |B \setminus U_S| = | \Delta(S) |, 
\]
as desired.
\end{proof}

We are ready to present the main result of this section, Theorem~\ref{Char for partial in groups}. Several consequences will follow, including a decomposition theorem for unmatchable pairs in Section~\ref{Structure section for partial}.

\begin{theorem}  \label{Char for partial in groups}
Let \( A \) and \( B \) be nonempty finite subsets of an abelian group \( G \), with \( |A| = |B| \) and \( 1 \notin B \). Let \( 0 \leq d \leq |A| \) be an integer. Then there exists a partial matching of \( (A,B) \) with defect \( d \) if and only if, for every pair of subsets \( S \subseteq A \) and \( R \subseteq B \cup \{ 1 \} \) with \( SR = S \), we have 
\[ |S| - d \leq |B \setminus R|. 
\]
\end{theorem}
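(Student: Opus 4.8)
The plan is to reduce the statement to the two results already in hand: the Hall--Ore theorem (Theorem~\ref{Ore}) and the equivalence of conditions (i) and (ii) in Theorem~\ref{Equivalence for groups}. The key observation is that the combinatorial condition supplied by Hall--Ore and the arithmetic condition appearing in the statement are precisely conditions (i) and (ii) of Theorem~\ref{Equivalence for groups} for a single well-chosen pair of values of the parameters \( \alpha \) and \( \beta \).

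First I would apply Theorem~\ref{Ore} to the deltoid \( (A,\Delta,B) \). This shows that a partial matching of \( (A,B) \) with defect \( d \) exists if and only if \( |S| - d \leq |\Delta(S)| \) for every subset \( S \subseteq A \), where \( \Delta(S) = \{ b \in B : Sb \not\subseteq A \} \). I would then read this inequality as condition (i) of Theorem~\ref{Equivalence for groups} with \( \alpha = 1 \) and \( \beta = -d \).

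Next I would verify that the hypotheses of Theorem~\ref{Equivalence for groups} are satisfied: the sets \( A \) and \( B \) are nonempty and finite with \( |A| = |B| \) and \( 1 \notin B \) by assumption, and \( \alpha = 1 \geq 1 \), so that theorem applies with these parameter values. Invoking the equivalence of (i) and (ii), the condition \( |S| - d \leq |\Delta(S)| \) for all \( S \subseteq A \) is equivalent to the condition that \( |S| - d \leq |B \setminus R| \) for every pair of subsets \( S \subseteq A \) and \( R \subseteq B \cup \{ 1 \} \) with \( SR = S \). This last condition is exactly the arithmetic criterion in the statement, completing the chain of equivalences.

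I do not expect a genuine obstacle here, since the substantive work---handling the case \( SR \neq S \) by means of Dyson's \( e \)-transform---has already been carried out inside the proof of Theorem~\ref{Equivalence for groups}. The only points requiring care are bookkeeping ones: confirming that \( \alpha = 1 \) meets the constraint \( \alpha \geq 1 \) demanded by that theorem, and that the substitution \( \beta = -d \) turns both conditions into the desired integer inequalities. Thus the present theorem is essentially a clean specialization of the two preceding results.
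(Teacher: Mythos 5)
Your proposal is correct and follows exactly the paper's own argument: the paper likewise proves this theorem by combining Theorem~\ref{Ore} with Theorem~\ref{Equivalence for groups} specialized to \( \alpha = 1 \) and \( \beta = -d \). No issues.
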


\begin{proof}
We simply apply Theorems~\ref{Ore} and \ref{Equivalence for groups}, taking \( \alpha = 1 \) and \( \beta = -d \) in the latter theorem.
\end{proof}

If \( R \) is a subset of a group \( G \), we write \( \langle R \rangle \) for the subgroup of \( G \) generated by \( R \). The proposition below explains exactly how the condition \( SR = S \) will be used in this paper. A proof of this well-known algebraic fact appears in \cite{Aliabadi 5}.

\begin{proposition} \label{UnionOfCosets}
Let \( G \) be an abelian group and let \( S \) and \( R \) be nonempty finite subsets of \( G \). Then \( SR = S \) if and only if \( S \) is a union of cosets of \( \langle R \rangle \). 
\end{proposition}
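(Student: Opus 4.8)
Write $H = \langle R \rangle$ for the subgroup of $G$ generated by $R$. The plan is to prove the two implications separately, reformulating the conclusion as the statement that $S$ is invariant under right multiplication by every element of $H$; equivalently $SH = S$; equivalently $S$ is a union of $H$-cosets.

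For the (easier) backward direction, I would assume $S$ is a union of cosets of $H$ and deduce $SR = S$. Since $R \subseteq H$, each $r \in R$ satisfies $sr \in sH \subseteq S$ for every $s \in S$, giving $SR \subseteq S$. For the reverse inclusion, I would fix any $r \in R$ (possible since $R \neq \emptyset$) and observe that $sr^{-1} \in sH \subseteq S$, so $s = (sr^{-1})r \in SR$; hence $S \subseteq SR$ and the two sets coincide.

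For the forward direction, assume $SR = S$. The crucial observation is a cardinality argument: for each fixed $r \in R$ we have $Sr \subseteq SR = S$, while right multiplication by $r$ is a bijection of $G$, so $|Sr| = |S|$. Because $S$ is finite, $Sr \subseteq S$ together with $|Sr| = |S|$ forces $Sr = S$. Applying $r^{-1}$ then gives $Sr^{-1} = S$ as well. Thus the stabilizer $\{ g \in G : Sg = S \}$ is a subgroup of $G$ containing $R$, and therefore contains $H = \langle R \rangle$. Consequently $SH = S$, which says exactly that $sH \subseteq S$ for every $s \in S$, i.e.\ $S$ is a union of cosets of $H$.

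The main obstacle---really the only subtle point---is the passage from the global identity $SR = S$ to the individual identities $Sr = S$ for each $r \in R$; this is where finiteness of $S$ is essential (the statement can fail for infinite $S$, as with $G = \mathbb{Z}$, $R = \{0,1\}$, and $S = \mathbb{N}$), and it is precisely the cardinality/injectivity step that does the work. Everything else is routine set manipulation, and the closure of the stabilizer under inverses and products is immediate from the group axioms.
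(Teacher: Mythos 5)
Your proof is correct. There is nothing in the paper to compare it against line by line: the paper does not prove Proposition~\ref{UnionOfCosets} at all, but instead defers to the authors' earlier work \cite{Aliabadi 5}, describing the statement as a well-known algebraic fact. Your argument is the standard (and essentially the only natural) proof: the backward direction is routine, and in the forward direction the whole content lies exactly where you locate it, namely in using finiteness of \( S \) and injectivity of translation to upgrade \( Sr \subseteq S \) to \( Sr = S \), after which the stabilizer \( \{ g \in G : Sg = S \} \) is a subgroup containing \( R \), hence containing \( \langle R \rangle \). Two remarks worth noting: your argument never uses commutativity of \( G \) or finiteness of \( R \) (only \( S \) finite matters), so it proves a slightly more general statement than the one asserted; and your counterexample \( G = \mathbb{Z} \), \( S = \mathbb{N} \), \( R = \{0,1\} \) correctly shows that the finiteness hypothesis on \( S \) cannot be dropped, which is a useful sanity check that the cardinality step is genuinely necessary.
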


Consider nonempty finite sets $A$ and $B$ with $|A|=|B|$ and $1\notin B$ in an abelian group \( G \), and define \( \delta(A,B) \) to be the smallest nonnegative integer \( d \) such that \( (A,B) \) has a partial matching with defect \( d \).  We call \( \delta(A,B) \) the {\it deficiency} of the pair \( (A,B) \). Note that \( \delta(A,B) = |A| \) if the only partial matching is the empty one.  It is an immediate consequence of the Ore deficiency theorem (see Theorem~1.3.1 in \cite{Lovasz}) that \( \delta(A,B) \) equals the maximum value of the difference
\[
|S| - |\Delta(S)|,
\]
where \( S \) ranges over all subsets of \( A \). For our first application of Theorem~\ref{Char for partial in groups}, we present an alternate formula for \( \delta(A,B) \).

\begin{corollary}
Let \( A \) and \( B \) be nonempty finite subsets of an abelian group \( G \), with \( |A|=|B| \) and \( 1 \notin B \). Then \( \delta(A,B) \) equals the maximum value of 
\[
|S|-|B \setminus R|,
\]
where \( S \) and \( R \) range over all subsets of \( A \) and \( B \cup\{1\} \), respectively, such that \( SR = S \).
\end{corollary}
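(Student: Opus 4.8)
The plan is to denote by $m$ the maximum of $|S| - |B \setminus R|$ taken over all pairs $(S,R)$ with $S \subseteq A$, $R \subseteq B \cup \{1\}$, and $SR = S$, and then to show $\delta(A,B) = m$ by reading off the criterion of Theorem~\ref{Char for partial in groups}. The key observation is that these are \emph{exactly} the pairs appearing in that theorem, so the whole statement should reduce to a direct translation of the Hall--Ore-type condition together with the definition of $\delta(A,B)$. Since every defect has the form $|A \setminus A'|$, it automatically satisfies $0 \le d \le |A|$, so it suffices to analyze this range.

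First I would rewrite the criterion itself. For a fixed integer $d$ with $0 \le d \le |A|$, the condition ``$|S| - d \le |B \setminus R|$ for every admissible pair $(S,R)$'' is equivalent to ``$d \ge |S| - |B \setminus R|$ for every such pair,'' which is in turn equivalent to $d \ge m$. Thus Theorem~\ref{Char for partial in groups} yields, for each $d \in \{0, 1, \ldots, |A|\}$, that a partial matching of $(A,B)$ with defect $d$ exists if and only if $d \ge m$.

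Next I would verify that $m$ is itself an admissible defect value, i.e.\ that $0 \le m \le |A|$. The upper bound is immediate, since $|S| \le |A|$ and $|B \setminus R| \ge 0$ force $|S| - |B \setminus R| \le |A|$ for every pair. For the lower bound I would exhibit a single pair realizing a nonnegative value: taking $S = A$ and $R = \{1\}$ gives $AR = A = S$, and, using $1 \notin B$ together with $|A| = |B|$, one has $|B \setminus R| = |B| = |A|$, whence $|S| - |B \setminus R| = 0$. Therefore $m \ge 0$.

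Combining these facts, the set of defects $d$ for which a partial matching of $(A,B)$ exists is exactly $\{m, m+1, \ldots, |A|\}$, whose least element is $m$; since $\delta(A,B)$ is by definition this least element, we conclude $\delta(A,B) = m$. I expect the only point genuinely requiring care to be the range check, specifically the verification that $m \ge 0$ so that $m$ is attainable as a defect: without it the argument would only give $\delta(A,B) = \max(0,m)$. The inequality manipulation and the minimization are otherwise routine, and one could alternatively derive the same identity by matching the Ore deficiency formula $\delta(A,B) = \max_S \bigl(|S| - |\Delta(S)|\bigr)$ against Theorem~\ref{Equivalence for groups} with $\alpha = 1$.
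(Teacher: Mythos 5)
Your proposal is correct and follows essentially the same route as the paper: both directions are read off from Theorem~\ref{Char for partial in groups} by translating the defect-$d$ criterion into $d \ge m$ and then invoking the minimality in the definition of $\delta(A,B)$. Your explicit verification that $0 \le m \le |A|$ (via the pair $S=A$, $R=\{1\}$) is a small point of extra care that the paper's proof leaves implicit, and it is a legitimate hypothesis check before applying the theorem with $d=m$.
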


\begin{proof}
Let $d=\delta(A,B)$ and define \( \mathcal{I} \) to be the set of all differences \( |S|-|B \setminus R| \), where \( S \) and \( R \) are as in the statement.  By Theorem~\ref{Char for partial in groups}, since $(A,B)$ admits a partial matching with defect $d$, it follows that for all such \( S \) and \( R \), 
\[
|S|-d\ \le\ |B\setminus R|,
\]
hence $d\ \ge\ |S|-|B\setminus R|$. In particular, $d \ge \max \mathcal{I}$.

For the reverse inequality, let $d'= \max \mathcal{I}$. Then for all $S\subseteq A$ and $R\subseteq B\cup\{1\}$ with $SR=S$, we have
\[
|S|-d'\ \le\ |B\setminus R|.
\]
Applying Theorem~\ref{Char for partial in groups} again, we obtain that $(A,B)$ admits a partial matching with defect $d'$. By the minimality of $d=\delta(A,B)$, we must have $d\le d'$. Combining with the first inequality gives $d=d'$, as claimed.
\end{proof}

In the next corollary, we present a generalization of the fact, first established in \cite{Losonczy 2}, that \( (A,A) \) is matchable provided \( 1 \notin A \).

\begin{corollary} \label{Intersection in Groups}
Let \( A \) and \( B \) be nonempty finite subsets of an abelian group \( G \), with \( |A| = |B| \) and \( 1 \notin A \cup B \). Let \( d = |A| - |A \cap B| \). Then there exists a partial matching of \( (A,B) \) with defect \( d \). 
\end{corollary}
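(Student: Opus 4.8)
The plan is to apply the criterion of Theorem~\ref{Char for partial in groups} directly: with $d = |A| - |A \cap B|$, it suffices to show that for every pair of subsets $S \subseteq A$ and $R \subseteq B \cup \{1\}$ with $SR = S$ one has $|S| - d \le |B \setminus R|$. First I would rewrite this target inequality in a more symmetric form. Since $1 \notin B$, the only element of $R$ possibly lying outside $B$ is $1$, which does not affect $B \setminus R$; hence $|B \setminus R| = |B| - |R \cap B|$. Using $|A| = |B|$ together with inclusion--exclusion, $|B| + d = |A| + |B| - |A \cap B| = |A \cup B|$. Thus the inequality to be established is equivalent to
\[
|S| + |R \cap B| \le |A \cup B|.
\]
The case $S = \emptyset$ is immediate (the left side is then at most $|B| \le |A \cup B|$), so I may assume $S$ is nonempty, which forces $R$ to be nonempty as well.

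The crux is to prove that $S$ and $R \cap B$ are disjoint; once this is known, $|S| + |R \cap B| = |S \cup (R \cap B)|$, and since $S \subseteq A$ and $R \cap B \subseteq B$, the right-hand side is at most $|A \cup B|$, which finishes the argument. To establish disjointness I would invoke Proposition~\ref{UnionOfCosets}: from $SR = S$ with $S, R$ nonempty, the set $S$ is a union of cosets of $H = \langle R \rangle$. Suppose, for contradiction, that some $x$ lies in $S \cap R \cap B$. Then $x \in R \subseteq H$, so its coset is $xH = H$; as $S$ is a union of $H$-cosets and contains $x$, this forces $H \subseteq S$, and in particular $1 \in S \subseteq A$. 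This contradicts the hypothesis $1 \notin A$, so $S \cap (R \cap B) = \emptyset$, as required.

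The main obstacle is precisely this disjointness step, since $S \subseteq A$ and $R \cap B \subseteq B$ could a priori overlap in $A \cap B$; the essential point is that the hypothesis $1 \notin A$ (rather than merely $1 \notin B$) rules this out, by pushing the identity coset of $\langle R \rangle$ into $S$. I would note as a sanity check that this recovers the earlier result that $(A,A)$ is matchable when $1 \notin A$: taking $B = A$ yields $d = 0$. Finally I would verify the bookkeeping that $0 \le d \le |A|$, which holds because $\emptyset \subseteq A \cap B \subseteq A$, so that Theorem~\ref{Char for partial in groups} indeed applies with this value of $d$.
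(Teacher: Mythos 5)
Your proposal is correct and follows essentially the same route as the paper: apply Theorem~\ref{Char for partial in groups}, use Proposition~\ref{UnionOfCosets} and the hypothesis \(1 \notin A\) to show that \(S\) and \(R\) (minus the identity) are disjoint, and then count inside \(A \cup B\) using \(|A \cup B| = |B| + d\). The only cosmetic differences are that you separate out the trivial case \(S = \emptyset\) and phrase the disjointness as \(S \cap (R \cap B) = \emptyset\) rather than \(S \cap R = \emptyset\), neither of which changes the substance.
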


\begin{proof}
Let \( S \) and \( R \) be sets satisfying \( S \subseteq A \), \( R \subseteq B \cup \{ 1 \} \), and \( SR = S \). We will show that \( |S| - d \leq |B \setminus R|\).

\medskip

We claim that \( S \cap R = \emptyset \).  To see this, assume the contrary and let \( a \in  S \cap R \).  Then \( a \langle R \rangle = \langle R \rangle \), and \( a \langle R \rangle \subseteq S \) by Proposition~\ref{UnionOfCosets}. Therefore \( 1 \in S \subseteq A \), a contradiction.

\medskip

Using the claim and the fact that \( S \cup (R \setminus \{ 1 \} ) \subseteq A \cup B \), we find that 
\[
|S| + | R \setminus \{ 1 \} | \leq |A \cup B| = |A| + |B| - |A \cap B| = |B| + d,
\]
hence
\[
|S| - d \leq |B| - | R \setminus \{ 1 \} | = |B \setminus (R \setminus \{ 1 \}) | = |B \setminus R|,
\]
since \( 1 \notin B \). Applying Theorem~\ref{Char for partial in groups} completes the proof.
\end{proof}

Let \( G \) be an abelian group. We denote the order of any \( x \in G \) by \( o(x) \). A {\em progression} of {\em length} \( n \) in \( G \) is a sequence of the form \( a, ax, \ldots , ax^{n-1} \) where \( a, x \in G \) and \( 0 \leq n - 1 < o(x) \). If \( n > 1 \), we call \( x \) the {\em ratio} of the progression.  The result below reveals a connection between the partial matchability of \( (A,B) \) and the arithmetic characteristics of \( A \) and \( B \).

\begin{corollary} \label{arithmetic with defect}
Let \( A \) and \( B \) be nonempty finite subsets of an abelian group \( G \), with \( |A| = |B| \) and \( 1 \notin B \).  Let \( d \) and \( n \) be integers such that \( 0 \leq d \leq |B| \) and \( n > 0 \).  Assume that \( A \) contains no progression with ratio in \( B \) and of length greater than \( n \). Also, assume that all but at most \( d \) elements of \( B \) have order greater than \( n \).  Then there exists a partial matching of \( (A,B) \) with defect \( d \).
\end{corollary}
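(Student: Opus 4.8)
The plan is to verify the criterion supplied by Theorem~\ref{Char for partial in groups}: it suffices to show that for every pair of subsets $S \subseteq A$ and $R \subseteq B \cup \{1\}$ with $SR = S$, one has $|S| - d \leq |B \setminus R|$. So I would fix such a pair and aim for that inequality. If $S = \emptyset$, the inequality reads $-d \leq |B \setminus R|$, which holds since $d \geq 0$ and $|B \setminus R| \geq 0$; hence I may assume $S \neq \emptyset$, which in turn forces $R \neq \emptyset$ (otherwise $SR = \emptyset \neq S$). By Proposition~\ref{UnionOfCosets}, $S$ is then a union of cosets of the subgroup $H = \langle R \rangle$.

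The crux is to argue that every $r \in R \setminus \{1\}$ satisfies $o(r) \leq n$, and this is precisely where the progression hypothesis enters. Suppose instead that some $r \in R \setminus \{1\}$ has $o(r) > n$. Choosing any $g \in S$, the fact that $S$ is a union of cosets of $H$ forces the whole coset $gH$ into $S \subseteq A$, and in particular $g\langle r \rangle \subseteq A$. But $g\langle r \rangle = \{g, gr, gr^2, \ldots, gr^{o(r)-1}\}$ is a progression with ratio $r \in B$ of length $o(r) > n$, contradicting the assumption that $A$ contains no progression with ratio in $B$ of length greater than $n$. Thus no such $r$ exists, and every element of $R \setminus \{1\}$ is an element of $B$ of order at most $n$.

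To finish, I would count. Since at most $d$ elements of $B$ have order $\leq n$, the previous step gives $|R \setminus \{1\}| \leq d$. Because $1 \notin B$, we have $B \cap R = R \setminus \{1\}$, so $|B \setminus R| = |B| - |R \setminus \{1\}| \geq |B| - d$. Combining this with $|S| \leq |A| = |B|$ yields $|S| - d \leq |B| - d \leq |B \setminus R|$, as required, and Theorem~\ref{Char for partial in groups} then delivers a partial matching of defect $d$. The only step carrying real content is the translation in the second paragraph—recognizing a coset of $\langle r \rangle$ as a full-length progression of ratio $r$ sitting inside $A$, so that a high-order element of $R$ is incompatible with a nonempty $S$—while the surrounding inequalities are routine bookkeeping.
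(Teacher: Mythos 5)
Your proposal is correct and follows essentially the same route as the paper: apply Theorem~\ref{Char for partial in groups}, use Proposition~\ref{UnionOfCosets} to show every element of $R\setminus\{1\}$ has order at most $n$ (via the progression hypothesis), deduce $|R\setminus\{1\}|\le d$, and finish with the same counting. The only cosmetic difference is that the paper exhibits the length-$(n+1)$ progression $ax,ax^2,\ldots,ax^{n+1}$ rather than the full coset $g\langle r\rangle$, but the argument is identical in substance.
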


\begin{proof}
Let \( S \) and \( R \) be sets such that \( S \subseteq A \), \( R \subseteq B \cup \{ 1 \} \), and \( SR = S \).  As in the previous proof, we will show that \( |S| - d \leq |B \setminus R| \). We may assume that \( S \) is nonempty.

\medskip

We claim that \( | R \setminus \{ 1 \} | \leq d \).  To see this, suppose \( x \in R \setminus \{ 1 \} \), and let \( a \in S \).  By Proposition~\ref{UnionOfCosets}, we have \( a \langle x \rangle \subseteq a \langle R \rangle \subseteq S \).  It follows that \( o(x) \leq n \), since otherwise the sequence \(  ax, ax^2, \ldots ,ax^{n+1} \) would be a progression in \( S \subseteq A \) with ratio in \( B \) and of length greater than \( n \).  By hypothesis, all but at most \( d \) elements of \( B \) have order greater than \( n \), so the claim is proved.

\medskip

It follows that, since \( 1 \notin B \), 
\[
|B \setminus R| = |B \setminus (R \setminus \{ 1 \} )| = |B| - | R \setminus \{ 1 \} | \geq |B| - d = |A| - d \geq |S| - d.
\]
We conclude by applying Theorem~\ref{Char for partial in groups}.
\end{proof}

Let \( G \) be a group and let \( d \) be a nonnegative integer.  A nonempty finite subset \( B \) of \( G \) is called a \emph{$d$-defective Chowla set} if there exists \( B_0 \subseteq B \) with \( |B_0| \geq |B| - d \) such that \( o(x) > |B| \) for every \( x \in B_0 \).  A $0$-defective Chowla set is better known as a Chowla set \cite{Hamidoune}. The corollary below generalizes Theorem~1.1 in \cite{Hamidoune} in the case where \( G \) is abelian.

\begin{corollary}\label{DefChowla}
Let \( A \) and \( B \) be nonempty finite subsets of an abelian group \( G \), with \( |A| = |B| \) and \( 1 \notin B \).  Let \( 0 \leq d \leq |B| \) be an integer.  Assume that \( B \) is a $d$-defective Chowla set. Then there exists a partial matching of \( (A,B) \) with defect \( d \).
\end{corollary}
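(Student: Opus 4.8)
The plan is to deduce this directly from Corollary~\ref{arithmetic with defect}, applied with the single choice $n = |B|$. First I would observe that, with this value of $n$, the second hypothesis of that corollary---that all but at most $d$ elements of $B$ have order greater than $n$---is precisely the $d$-defective Chowla condition. Indeed, the hypothesis supplies a subset $B_0 \subseteq B$ with $|B_0| \ge |B| - d$ all of whose elements satisfy $o(x) > |B| = n$; hence at most $d$ elements of $B$ can fail to have order exceeding $n$.

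Next I would verify the first hypothesis of Corollary~\ref{arithmetic with defect}, namely that $A$ contains no progression with ratio in $B$ of length greater than $n = |B|$. This is the step that carries the argument. By the defining condition $0 \le \ell - 1 < o(x)$, a progression of length $\ell$ consists of $\ell$ pairwise distinct elements of $G$. A progression of length greater than $|B| = |A|$ contained in $A$ would thus exhibit more than $|A|$ distinct members of $A$, which is impossible. Consequently no such progression exists and the hypothesis is satisfied vacuously.

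With both hypotheses in hand---and noting that $n = |B| > 0$ because $B$ is nonempty, while the constraint $0 \le d \le |B|$ is assumed---Corollary~\ref{arithmetic with defect} produces a partial matching of $(A,B)$ with defect $d$, completing the argument. I expect the only point requiring care to be the counting observation that a progression longer than $|A|$ cannot fit inside $A$; the remainder is simply the reinterpretation of the Chowla condition in the form demanded by the preceding corollary.
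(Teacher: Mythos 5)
Your proposal is correct and follows exactly the paper's route: apply Corollary~\ref{arithmetic with defect} with \( n = |A| = |B| \), noting that the \( d \)-defective Chowla condition is precisely the order hypothesis and that the progression hypothesis holds vacuously because a progression (whose terms are pairwise distinct by the condition \( n-1 < o(x) \)) of length greater than \( |A| \) cannot lie in \( A \). The only difference is that you spell out the vacuous verification of the progression hypothesis, which the paper leaves implicit.
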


\begin{proof}
By the definition of $d$-defective Chowla set, all but at most \( d \) elements of \( B \) have order greater than \( |B| \).  We now simply apply Corollary~\ref{arithmetic with defect}, taking \( n = |A| = |B| \).
\end{proof}

\section{Structure and existence theorems} \label{Structure section for partial}

Below, we establish a decomposition theorem for pairs \( (A,B) \) having positive deficiency.  This result can be viewed as a refinement of Theorem~2.3 in \cite{Aliabadi 6}.  It will be used in the proof of Theorem~\ref{Existence partial} and in our study of admissible sets in Section~\ref{Decompose section groups}.

\begin{theorem} \label{Structure of PMG}
Let \( A \) and \( B \) be nonempty finite subsets of an abelian group \( G \), with \( |A| = |B| \) and \( 1 \notin B\).  Let \( \ell \) be a nonnegative integer.  Then \( \delta(A,B) > \ell \) if and only if there exists a nonempty subset \( R \) of \( B \) such that \( A \) and \( B \) can be expressed as disjoint unions as follows:
\[
A = S \cup Y, \qquad 
B = R \cup Z,
\]
where \( S \) is a nonempty union of cosets of \( \langle R \rangle \), and \( Y \)satisfies \( |Y| < | R | - \ell \). 
\end{theorem}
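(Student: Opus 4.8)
The plan is to reduce everything to the alternate formula for the deficiency established just after Theorem~\ref{Char for partial in groups}, namely that $\delta(A,B)$ equals the maximum of $|S|-|B\setminus R|$ over all pairs $S\subseteq A$ and $R\subseteq B\cup\{1\}$ with $SR=S$. With this in hand, the condition $\delta(A,B)>\ell$ is equivalent to the existence of such a pair $(S,R)$ with $|S|-|B\setminus R|>\ell$, and the whole theorem becomes a matter of translating between this pair and the asserted decomposition. The one genuine subtlety --- and the only place requiring care --- is the bookkeeping around the identity element, since the characterization allows $R$ to contain $1$ whereas the decomposition demands $R\subseteq B$.

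For the forward direction, I would start from a witnessing pair $(S,R)$ with $|S|-|B\setminus R|>\ell\ge 0$. Since $|B\setminus R|\ge 0$, this forces $|S|>0$, so $S$ is nonempty. I would then replace $R$ by $R\setminus\{1\}$. Because $1\notin B$, this does not change $|B\setminus R|$, and because adjoining or deleting the identity does not alter the generated subgroup, Proposition~\ref{UnionOfCosets} shows that $S$ remains a union of cosets of $\langle R\rangle$ and hence that $SR=S$ still holds with the new $R\subseteq B$. I would next check that this new $R$ is nonempty: if it were empty, then $|B\setminus R|=|B|$, giving $|S|-|B\setminus R|=|S|-|B|\le|A|-|B|=0\le\ell$, a contradiction. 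Finally, setting $Y=A\setminus S$ and $Z=B\setminus R$ yields the disjoint unions $A=S\cup Y$ and $B=R\cup Z$, and the defining inequality rearranges to $|R|-|Y|=|S|-(|B|-|R|)=|S|-|B\setminus R|>\ell$, i.e.\ $|Y|<|R|-\ell$, as required.

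For the converse, I would simply feed the given decomposition back into the alternate formula. The pair $(S,R)$ satisfies $S\subseteq A$, $R\subseteq B\subseteq B\cup\{1\}$, and $SR=S$ (by Proposition~\ref{UnionOfCosets}, since $S$ is a union of cosets of $\langle R\rangle$), so it is admissible in the maximization and gives
\[
\delta(A,B)\ \ge\ |S|-|B\setminus R|\ =\ |S|-(|B|-|R|)\ =\ |R|-|Y|,
\]
where the last equality uses $|A|=|B|$ and $|S|=|A|-|Y|$. Since $|Y|<|R|-\ell$, the right-hand side exceeds $\ell$, whence $\delta(A,B)>\ell$. I expect the main (indeed only) obstacle to be the identity-element bookkeeping in the forward direction --- ensuring that passing from $R$ to $R\setminus\{1\}$ simultaneously preserves $SR=S$, preserves $|B\setminus R|$, and leaves $R$ nonempty --- while the remainder is routine arithmetic with the deficiency formula.
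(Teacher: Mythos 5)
Your proposal is correct and follows essentially the same route as the paper: the paper invokes Theorem~\ref{Char for partial in groups} directly (via the failure of the defect-$\ell$ condition), while you enter through the corollary giving $\delta(A,B)=\max\bigl(|S|-|B\setminus R|\bigr)$, which is an immediate repackaging of that theorem. The substantive steps --- stripping $1$ from $R$ without changing $|B\setminus R|$ or the coset structure, verifying $S$ and $R$ are nonempty, and the arithmetic identifying $|S|-|B\setminus R|$ with $|R|-|Y|$ --- match the paper's argument.
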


\begin{proof}
Our argument is a modified version of the proof of Theorem~2.3 in \cite{Aliabadi 6}.  Assume that \( \delta(A,B) > \ell \). Then there is no partial matching of \( (A,B) \) with defect \( \ell \), and so, by Theorem~\ref{Char for partial in groups}, there exist sets \( S \) and \( R \) satisfying the following conditions: 
\begin{itemize}
\item[(a)] \( S \subseteq A \) and  \( R \subseteq B \cup \{ 1 \} \),
\item[(b)] \( SR = S \), 
\item[(c)] \( |S| > |B \setminus R | + \ell \).
\end{itemize}
By (c), we have \( S \neq \emptyset \), and then (b) gives us \( R \neq \emptyset \) as well.  From condition (b) and Proposition~\ref{UnionOfCosets}, we see that \( S \) is a union of cosets of \( \langle R \rangle \).  Now, \( R \neq \{ 1 \} \) by (c) again. Moreover, we may assume that  \( 1 \notin R \), since otherwise we can replace \( R \) with \( R \setminus \{ 1 \} \), and (a)--(c) will still hold.

Thus \( R \subseteq B \), and so condition (c) becomes
\[
|S| > |B| - |R| + \ell.
\]
Take \( Y = A \setminus S \) and \( Z = B \setminus R \). Since \( |A| = |B| \) and \( S \subseteq A \), the above inequality shows that \( |Y| < |R| - \ell \), giving us the desired decomposition.

Conversely, assume that \( A \) and \( B \) can be written as in the statement. Then the sets \( S \) and \( R \) satisfy \( S \subseteq A \), \( R \subseteq B \), and \( SR = S \).  Furthermore, the inequality \( |Y| < | R | - \ell \) gives us 
\[ |S| + |R| - \ell > |S| + |Y| = |A| = |B|, 
\]
hence \( |S| - \ell > |B \setminus R|\). Applying Theorem~\ref{Char for partial in groups} (with \( d = \ell\)), we find that there is no partial matching of \( (A,B) \) with defect \( \ell \), and so \( \delta(A,B) > \ell \).
\end{proof}

\begin{example} \label{Structure Example}
In the additive group \( G = \mathbb{Z}/12\mathbb{Z} \), consider the sets
\[ 
A = \{ \overline{0}, \overline{1}, \overline{2}, \overline{4}, \overline{6}, \overline{8}, \overline{10}, \overline{11} \},
\qquad
B = \{ \overline{1}, \overline{2}, \overline{3}, \overline{4}, \overline{6}, \overline{8}, \overline{10}, \overline{11} \}.
\]
For the subset \( R = \{ \overline{2}, \overline{4}, \overline{6}, \overline{8}, \overline{10} \} \) of \( B \), we can write \( A \) and \( B \) as disjoint unions
\[
A = S \cup Y, \qquad B = R \cup Z,
\]
where \( S = \langle R \rangle \), \( Y = \{ \overline{1}, \overline{11} \} \), and \( Z = B \setminus R \). Observe that \( |Y| < |R| - 2 \), and so \( \delta(A,B) > 2 \) by Theorem~\ref{Structure of PMG}.  In fact, \( \delta(A,B) = 3 \), since the mapping \( A \setminus \{\overline{4}, \overline{6}, \overline{8} \} \longrightarrow B \) given by \( \overline{0}\mapsto \overline{3} \), \( \overline{1}\mapsto \overline{2} \), \( \overline{2}\mapsto \overline{1} \), \( \overline{10}\mapsto \overline{11} \), \( \overline{11}\mapsto \overline{10} \) is a partial matching of \( (A,B) \) with defect $3$.
\end{example}

For any group \( G \) with at least one finite nontrivial proper subgroup, we let \( n_0(G) \) denote the smallest size of such a subgroup.  In the theorem below, we provide a necessary and sufficient condition for the existence of subsets \( A,B \subseteq G \) of prescribed size with \( 1 \notin B \) and deficiency \( \delta(A,B) \) greater than a given value $\ell$.

\begin{theorem} \label{Existence partial}
Let $G$ be an abelian group with at least one finite nontrivial proper subgroup. Let \( \ell \) and \( n \) be integers such that \( \ell \ge 0 \) and \( n_0(G) \le n<|G| \). Then there exist subsets $A,B\subseteq G$ such that \( 1\notin B \), \( |A|=|B|=n \), and \( \delta(A,B) > \ell \) if and only if there is a subgroup \( H \) of \( G \) with \( |H|\le n \) and \( |H|\nmid (n+j) \) for all \( j = 1, \ldots, \ell+1 \).
\end{theorem}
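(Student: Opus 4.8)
The plan is to derive both implications from the decomposition theorem (Theorem~\ref{Structure of PMG}), converting its inequality $|Y| < |R| - \ell$ into a congruence condition on $n$ modulo $h := |H|$. Throughout I would write $h = |H|$ and keep track of $n \bmod h$. The organizing fact is that the divisibility hypotheses on $H$ are equivalent, after a short elementary argument, to the pair of conditions $h \ge \ell + 2$ and $n \bmod h \le h - \ell - 2$; I would establish this translation once and use it in both directions.

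For the forward implication, I would begin with a pair $(A,B)$ satisfying $\delta(A,B) > \ell$ and apply Theorem~\ref{Structure of PMG} to obtain a nonempty $R \subseteq B$ together with disjoint decompositions $A = S \cup Y$ and $B = R \cup Z$, where $S$ is a nonempty union of cosets of $\langle R \rangle$ and $|Y| < |R| - \ell$. Set $H = \langle R \rangle$. Since $S$ contains at least one coset of $H$ and $S \subseteq A$ is finite, $H$ is finite with $h \le |S| \le n$, giving $|H| \le n$. Because $1 \notin B$ we have $1 \notin R$, so $R \subseteq H \setminus \{1\}$ and hence $|R| \le h - 1$. Writing $|S| = mh$ with $m \ge 1$ gives $|Y| = n - mh$, and then $|Y| < |R| - \ell \le h - 1 - \ell$ forces $0 \le |Y| \le h - 2 - \ell$. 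For each $j \in \{1,\ldots,\ell+1\}$ this yields $0 < |Y| + j \le h - 1 < h$, so $h \nmid (|Y| + j)$; since $n \equiv |Y| \pmod{h}$, this means $h \nmid (n+j)$. Thus $H$ is the required subgroup.

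For the converse, I would assume a subgroup $H$ with $|H| \le n$ and $|H| \nmid (n+j)$ for $j = 1,\ldots,\ell+1$ is given, and first record the elementary translation: no multiple of $h$ lies among the $\ell+1$ consecutive integers $n+1,\ldots,n+\ell+1$, which forces $\ell + 1 < h$ (any $h$ consecutive integers contain a multiple of $h$) and $n \bmod h \le h - \ell - 2$; in particular $h \ge 2$, so $H$ is nontrivial. I would then build $(A,B)$ to fit the structure theorem. Put $R = H \setminus \{1\}$, so $|R| = h - 1$, $1 \notin R$, and $\langle R \rangle = H$. Let $q = \lfloor n/h \rfloor \ge 1$ and $r = n - qh = n \bmod h$; since $qh \le n < |G|$ the index $[G:H]$ exceeds $q$, so $S$ may be taken to be a union of $q$ distinct cosets of $H$, with $|S| = qh$. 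Because $n < |G|$ there is room to choose an $r$-element set $Y \subseteq G \setminus S$ and an $(n-h+1)$-element set $Z \subseteq G \setminus H$; set $A = S \cup Y$ and $B = R \cup Z$. Then $|A| = |B| = n$, the unions are disjoint, and $1 \notin B$ since $1 \notin R$ and $Z \cap H = \emptyset$. Finally $|Y| = r \le h - \ell - 2 < h - 1 - \ell = |R| - \ell$, so Theorem~\ref{Structure of PMG} gives $\delta(A,B) > \ell$.

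The main obstacle is the arithmetic bookkeeping that makes the two directions meet exactly: I must show that the single congruence constraint $n \bmod h \le h - \ell - 2$ coming from the divisibility hypothesis is precisely what is needed to realize $|Y| < |R| - \ell$ with the extremal choice $|R| = h - 1$, and conversely that the structure theorem can never supply a larger $R$ — this is where the hypothesis $1 \notin B$, forcing $1 \notin R$ and hence $|R| \le h - 1$, is essential. A secondary point needing care is checking that $G$ is large enough to house the auxiliary sets $S$, $Y$, and $Z$; here the hypothesis $n < |G|$ is used, while the nontriviality of $H$ comes for free from $h \ge 2$.
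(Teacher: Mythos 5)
Your proposal is correct and follows essentially the same route as the paper: both directions reduce to Theorem~\ref{Structure of PMG}, with the forward direction reading off $h\nmid(n+j)$ from $n\equiv|Y|\pmod h$ and $0<|Y|+j\le h-1$, and the converse using the identical construction $R=H\setminus\{1\}$, $S$ a union of $\lfloor n/h\rfloor$ cosets, plus padding sets $Y$ and $Z$. The only differences are cosmetic (you record the translation to $n\bmod h\le h-\ell-2$ up front and take $Z\subseteq G\setminus H$ instead of $G\setminus(R\cup\{1\})$), so no further comparison is needed.
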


\begin{proof}
Assume there exist sets \( A \) and \( B \) as in the statement.  By Theorem \ref{Structure of PMG}, we can write these as disjoint unions
\[
A = S \cup Y, 
\qquad 
B = R \cup Z,
\]
where $R$ is nonempty, $S$ is a nonempty union of cosets of $\langle R\rangle$, and $|Y| < |R|-\ell$.  Put $H=\langle R\rangle$, and note that \( H \) is finite as \( S \) is nonempty and finite. In fact, \( |H| \leq |S| \leq n \).  Let $m=|H|$, and write $|S|=mq$ where $q$ is a positive integer. We have $R\subseteq H\setminus\{1\}$, so $|R|\le m-1$. Therefore
$$
|Y|<|R|-\ell \le (m-1)-\ell,
$$ 
which gives 
\[
|Y|\le m-\ell-2.
\]
We also have
\[
n=|A|=|S|+|Y|=mq+|Y|.
\]
Hence, for each $t\in\{1,\dots,\ell+1\}$, 
\[
n+t = mq + (|Y|+t)
\]
and
\[
|Y|+t \le (m-\ell-2)+(\ell+1)=m-1,
\]
so the remainder of $n+t$ modulo $m$ is never $0$. This implies
\[
m\nmid (n+t)\qquad \text{for } t=1,\dots,\ell+1,
\]
which shows that $H$ is the desired subgroup.

Conversely, assume there exists a subgroup \( H \) of  \( G \) with \( |H|=m \le n \) and $m\nmid(n+j)$ for all \( j = 1, \ldots, \ell+1 \).
Write $n=mq+r$ with integers $q\ge 1$ and $0\le r<m$. We claim that
\[
r <  m-\ell-1.
\]
Assume instead that \( r + \ell + 1 \geq m \). Then the interval $\{r+1,\dots,r+\ell+1\}$ contains $m$, say $m=r+j_0$ where $1\le j_0\le \ell+1$. Since $n+j_0 \equiv r+j_0 \pmod m$, this gives $n+j_0\equiv 0\pmod m$, i.e., $m\mid(n+j_0)$, contradicting the assumption. The claim is proved.

We now construct $A$ and $B$ in the spirit of Theorem~2.9 of~\cite{Aliabadi 6}. Let \( R = H\setminus \{1\} \) (so $|R|=m-1$), and let $S$ be the union of $q$ distinct cosets of $H$ (so $|S|=mq$).
Choose a set $Y\subseteq G\setminus S$ with $|Y|=r$, and choose a set
$Z\subseteq G\setminus (R \cup \{1\})$ with
\[
|Z|=n-(m-1)=n-m+1.
\]
Set $A=S \cup Y$ and $B=R \cup Z$.
Then $|A|=|B|=n$ and $1\notin B$. Moreover,
\[
|Y|=r < m-\ell -1 = |R|-\ell.
\]
Since $S$ is a union of cosets of
\[
\langle R\rangle = \langle H \setminus \{1\} \rangle = H,
\]
Theorem~\ref{Structure of PMG} applies and yields $\delta(A,B) > \ell$, completing the proof.
\end{proof}

\section{Decomposition into admissible subsets}  \label{Decompose section groups}

Let \( A \) and \( B \) be nonempty finite subsets of an abelian group, with \( |A| = |B| \) and \( 1 \notin B \). In this section, we investigate the possibility of partitioning \( A \) and \( B \) into admissible subsets.  Recall, a subset of \( A \) is left-admissible if it is the domain of some partial matching of \( (A,B) \), and the range of such a partial matching is called a right-admissible subset of \( B \). 

We begin by defining the {\em right partition number} of \( (A,B) \), denoted by \( \rho = \rho(A,B) \).  If \( Ax \neq A \) for all \( x \in B \), we let \( \rho \) be the maximum value of the expression
\[
\left\lceil \frac{|U_S|}{|A| - |S|} \right\rceil,
\]
where \( S \) ranges over all proper subsets of \( A \), and, as usual, \( U_S = \{ b \in B : Sb \subseteq A \}. \) If instead \( Ax = A \) for some \( x \in B \), we set \( \rho = \infty \).

Our first result concerning the right partition number is given below.  Recall that for any \( S \subseteq A \), we have \( \Delta(S) = B \setminus U_S \).

\begin{theorem}  \label{Decomposition of B}
Let \( A \) and \( B \) be nonempty finite subsets of an abelian group \( G \), with \( |A| = |B| \) and \( 1 \notin B \).  Let \( \rho \) be the right partition number of \( (A,B) \).  Then \( B \) can be written as a union of right-admissible sets if and only if \( \rho < \infty \).  Moreover, when \( \rho < \infty \), \( B \) can be written as a union of \( \rho \), and no fewer, pairwise disjoint right-admissible sets.
\end{theorem}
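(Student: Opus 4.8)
The plan is to reduce the whole statement to Theorem~\ref{Partition B} after rewriting its inequality in terms of the sets $U_S$. Since $\Delta(S) = B \setminus U_S$ with $U_S \subseteq B$, we have $|\Delta(S)| = |B| - |U_S|$, and as $|A| = |B|$ the condition $k|S| + |B| \leq k|A| + |\Delta(S)|$ becomes the cleaner
\[
|U_S| \leq k(|A| - |S|).
\]
So for each positive integer $k$, the set $B$ is a union of $k$ pairwise disjoint right-admissible sets if and only if this holds for every $S \subseteq A$. I would immediately note that the right-hand side is nondecreasing in $k$, so the admissible values of $k$ form an up-set and the problem reduces to finding its least element.

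Next I would split off the extreme subset $S = A$ from the proper subsets. For $S = A$ the inequality reads $|U_A| \leq 0$; here I would identify $U_A = \{b \in B : Ab = A\}$, using that translation by $b$ maps the finite set $A$ bijectively onto $Ab \subseteq A$. Thus the $S = A$ constraint holds (for every, equivalently some, $k \geq 1$) exactly when $Ax \neq A$ for all $x \in B$, and otherwise fails for all $k$. For proper $S$ one has $|A| - |S| \geq 1$, so the inequality is equivalent to $\lceil |U_S|/(|A|-|S|) \rceil \leq k$ by the integer identity $\lceil t \rceil \leq k \iff t \leq k$; maximizing over proper $S$ turns the family of constraints into the single statement $\rho \leq k$ (valid precisely in the regime $Ax \neq A$ for all $x$).

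Assembling the two cases yields the theorem. In the case $Ax = A$ for some $x \in B$ (that is, $\rho = \infty$), the $S = A$ constraint fails for every $k$, so no finite disjoint partition exists; I would further observe that such an $x$ lies in no right-admissible set at all, since a singleton $\{x\}$ is right-admissible iff $ax \notin A$ for some $a \in A$, i.e. iff $Ax \neq A$. Hence $B$ is not even a union of right-admissible sets, which settles the forward direction of the first assertion. In the complementary case (so $\rho < \infty$, and $\rho \geq 1$ because $S = \emptyset$ gives $U_\emptyset = B$ and the value $\lceil |B|/|A|\rceil = 1$), the analysis shows $B$ is a union of $k$ disjoint right-admissible sets exactly when $k \geq \rho$; taking $k = \rho$ and applying Theorem~\ref{Partition B} produces a partition into $\rho$ sets while no smaller $k$ works. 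This gives the ``moreover'' part, and in particular exhibits $B$ as a union of right-admissible sets, completing the reverse direction of the first assertion.

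I expect the individual steps to be short, so the only real care — and the main (if modest) obstacle — lies in the boundary treatment: correctly computing $U_A$ and handling the $S = A$ term, which sits outside the $\lceil \cdot \rceil$ maximum defining $\rho$ (the denominator $|A|-|S|$ vanishing there), together with the ceiling/integer bookkeeping that collapses the inequalities $|U_S| \leq k(|A|-|S|)$ over proper $S$ into the clean threshold $\rho \leq k$.
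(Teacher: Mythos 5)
Your proposal is correct and follows essentially the same route as the paper: both reduce to Theorem~\ref{Partition B}, rewrite its inequality as \( |U_S| \le k(|A|-|S|) \) using \( |A|=|B| \) and \( \Delta(S)=B\setminus U_S \), handle \( S=A \) separately (noting \( Ab\subseteq A \iff Ab=A \) by finiteness, which also shows such an \( x \) lies in no right-admissible set), and collapse the proper-subset constraints to \( \rho\le k \) via the ceiling function. The only cosmetic difference is that you package the argument as an equivalence for every \( k \) and then take the least element, whereas the paper verifies \( k=\rho \) works and separately proves no smaller \( k \) does.
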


\begin{proof}
Suppose \( \rho = \infty \). Then \( Ax = A \) for some \( x \in B \). Such an \( x \) cannot belong to a right-admissible subset of \( B \), hence \( B \) cannot be expressed as a union of right-admissible sets.

Suppose \( \rho < \infty \).  Then \( \Delta(A) = B \).  We will show that the condition in Theorem~\ref{Partition B} holds for \( k = \rho \).  Let \( S \) be a subset of \( A \). If \( S = A \), then the inequality in Theorem~\ref{Partition B} clearly holds (we have equality in this instance):
\[ 
\rho|A| + |B| = \rho|A| + |\Delta(A)|.
\]
Suppose \( S \neq A \).  Then
\[
\frac{|U_S|}{|A| - |S|} \leq \rho,
\]
hence
\[
\rho |S| + |U_S| \leq \rho |A|.
\]
Adding \( |B| - |U_S| \) to both sides, we get
\[
\rho |S| + |B|  \leq \rho |A| + |B| - |U_S|.
\]
Observe that \( |B| - |U_S| = |B \setminus U_S| = |\Delta(S)| \).  Therefore, by Theorem~\ref{Partition B}, \( B \) can be written as a union of \( \rho \) pairwise disjoint right-admissible sets.  

To establish the asserted minimality property of \( \rho \), suppose that \( B \) can be written as a pairwise disjoint union of \( k \) right-admissible sets. Then, by Theorem~\ref{Partition B}, for any proper subset \( S \) of \( A \), 
\[
k|S| + |B| \leq k|A| + | \Delta(S) |.
\]
Solving for \( k \) and using the fact that \( \Delta(S) = B \setminus U_S \), we find that 
\[
k \geq \frac{|U_S|}{|A| - |S|}.
\]
Applying the ceiling function to both sides yields
\[
k \geq \left\lceil \frac{|U_S|}{|A| - |S|}\right\rceil.
\]
It now follows from the definition of \( \rho \) that \( k \geq \rho \).
\end{proof}

Next, we present a specialized version of Theorem~\ref{Partition B}, tailored to the deltoids under consideration and involving the useful condition \( SR = S \).

\begin{theorem}  \label{Char for partition in groups}
Let \( A \) and \( B \) be nonempty finite subsets of an abelian group \( G \), with \( |A| = |B| \) and \( 1 \notin B \). Let \( k \) be a positive integer. Then \( B \) can be written as a union of \( k \) pairwise disjoint right-admissible sets if and only if, for every pair of subsets \( S \subseteq A \) and \( R \subseteq B \cup \{ 1 \} \) with \( SR = S \), we have 
\[ 
k|S| - (k-1)|A| \leq  |B \setminus R|. 
\]
\end{theorem}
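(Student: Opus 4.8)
The plan is to mirror the proof of Theorem~\ref{Char for partial in groups}, which combined an abstract transversal criterion (Theorem~\ref{Ore}) with the \( e \)-transform equivalence (Theorem~\ref{Equivalence for groups}). Here the relevant abstract criterion is instead Theorem~\ref{Partition B}, the Mirsky-type condition for partitioning \( B \) into \( k \) pairwise disjoint right-admissible sets. So I would apply Theorem~\ref{Equivalence for groups} to convert that condition, which is phrased in terms of \( |\Delta(S)| \) ranging over all \( S \subseteq A \), into the corresponding condition phrased in terms of \( |B \setminus R| \) ranging over all pairs \( (S,R) \) with \( SR = S \).

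Concretely, the first step is to rewrite the inequality of Theorem~\ref{Partition B}, namely \( k|S| + |B| \le k|A| + |\Delta(S)| \), in the form \( \alpha|S| + \beta \le |\Delta(S)| \) appearing in condition (i) of Theorem~\ref{Equivalence for groups}. Matching coefficients forces \( \alpha = k \) and \( \beta = |B| - k|A| \). I would then verify the one hypothesis that requires attention: Theorem~\ref{Equivalence for groups} assumes \( \alpha \ge 1 \), and this holds because \( k \) is a positive integer. Note that \( \beta \) may well be negative here, but that is harmless, since the equivalence imposes no sign restriction on \( \beta \).

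With these parameter values, condition (ii) of Theorem~\ref{Equivalence for groups} reads \( k|S| + |B| - k|A| \le |B \setminus R| \) for every pair \( S \subseteq A \), \( R \subseteq B \cup \{ 1 \} \) with \( SR = S \). The final step is purely cosmetic: using the standing hypothesis \( |A| = |B| \), I would replace \( |B| - k|A| \) by \( |A| - k|A| = -(k-1)|A| \), bringing the inequality to the stated form \( k|S| - (k-1)|A| \le |B \setminus R| \). Chaining the two equivalences then yields the desired biconditional in both directions at once: \( B \) decomposes into \( k \) pairwise disjoint right-admissible sets if and only if the condition of Theorem~\ref{Partition B} holds, if and only if condition (ii) holds, if and only if the stated inequality holds for all admissible pairs \( (S,R) \).

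I do not anticipate any genuine obstacle; the result is a direct specialization of two previously established theorems. The only points demanding even momentary care are confirming \( \alpha = k \ge 1 \) so that Theorem~\ref{Equivalence for groups} is applicable, and correctly invoking \( |A| = |B| \) to absorb the \( |B| \) term into the coefficient of \( |A| \).
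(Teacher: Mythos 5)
Your proposal is correct and matches the paper's own proof, which likewise combines Theorem~\ref{Partition B} with Theorem~\ref{Equivalence for groups} using \( \alpha = k \) and \( \beta = -(k-1)|A| \) (equivalently your \( |B| - k|A| \) after invoking \( |A| = |B| \)). The only difference is the cosmetic one of when the identity \( |A| = |B| \) is applied.
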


\begin{proof}
We apply Theorems~\ref{Partition B} and \ref{Equivalence for groups}, this time taking \( \alpha = k \) and \( \beta = - (k-1)|A| \) in the latter and using the fact that \( |A| = |B| \).
\end{proof}

Using the above result, we obtain the following alternate formula for the right partition number of \( (A,B) \).

\begin{theorem}  \label{Formula for right partition}
Let \( A \) and \( B \) be nonempty finite subsets of an abelian group \( G \), with \( |A| = |B| \) and \( 1 \notin B \). Let \( \rho \) be the right partition number of \( (A,B) \).  Assume that \( \rho < \infty \).  Then \( \rho \) equals the maximum value of
\[
\left\lceil \frac{|R \setminus \! \{ 1 \}|}{|A| - |S|} \right\rceil,
\]
where \( S \) and \( R \) range over all subsets of \( A \) and \( B \cup \{ 1 \} \), respectively, such that \( SR = S \) and \( S \neq A \).
\end{theorem}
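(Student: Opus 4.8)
The plan is to read off the answer from the two partition results already at our disposal. Theorem~\ref{Char for partition in groups} characterizes exactly those positive integers $k$ for which $B$ splits into $k$ pairwise disjoint right-admissible sets, while the minimality clause of Theorem~\ref{Decomposition of B} identifies $\rho$ as the \emph{smallest} such $k$. So it will suffice to show that the criterion of Theorem~\ref{Char for partition in groups} is equivalent to the single inequality $k \geq M$, where $M$ is the maximum in the statement; the equality $\rho = M$ then drops out immediately.

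First I would put the inequality of Theorem~\ref{Char for partition in groups} into a more transparent form. Because $R \subseteq B \cup \{1\}$ and $1 \notin B$, the set $R \setminus \{1\}$ lies in $B$, so $|B \setminus R| = |B| - |R \setminus \{1\}|$; substituting this and using $|A| = |B|$, the condition $k|S| - (k-1)|A| \leq |B \setminus R|$ rearranges to
\[
|R \setminus \{1\}| \leq k\,(|A| - |S|).
\]
Thus $B$ decomposes into $k$ right-admissible sets if and only if this inequality holds for every pair $S \subseteq A$, $R \subseteq B \cup \{1\}$ with $SR = S$.

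Next I would isolate the case $S = A$, where $|A| - |S| = 0$ and the inequality forces $R \setminus \{1\} = \emptyset$ independently of $k$. I would argue that this is automatic under the hypothesis $\rho < \infty$: by definition of the right partition number, $\rho < \infty$ means $Ax \neq A$ for all $x \in B$, so if $AR = A$ and some $x \in R \setminus \{1\} \subseteq B$ existed, then $Ax \subseteq AR = A$ would give $Ax = A$ (equal finite sets), a contradiction. Hence the $S = A$ constraints are vacuous and may be dropped. For the remaining pairs we have $|A| - |S| \geq 1$, and the displayed inequality is equivalent for an integer $k$ to $\lceil |R \setminus \{1\}| / (|A| - |S|) \rceil \leq k$. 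Maximizing over all admissible pairs with $S \neq A$ — a finite, nonempty family (it contains $S = \emptyset$, $R = B \cup \{1\}$, which already yields the value $1$) — converts the whole criterion into $k \geq M$, with $M$ a well-defined positive integer. Since the least integer $k$ with $k \geq M$ is $M$ itself, Theorem~\ref{Decomposition of B} gives $\rho = M$.

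The algebraic rearrangement and the passage to the ceiling are routine; the step that genuinely needs attention is the boundary case $S = A$. It is exactly here that the standing assumption $\rho < \infty$ enters: without it, an $R$ with $AR = A$ and $R \setminus \{1\} \neq \emptyset$ could occur, and the corresponding constraint — which does not scale with $k$ — would prevent any decomposition of $B$. Verifying that this situation is excluded is what legitimizes the restriction ``$S \neq A$'' in the claimed formula.
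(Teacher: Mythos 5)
Your proposal is correct and follows essentially the same route as the paper: combine Theorem~\ref{Decomposition of B} (minimality) with Theorem~\ref{Char for partition in groups}, rewrite the inequality as \( |R\setminus\{1\}| \le k(|A|-|S|) \), observe that \( \rho<\infty \) forces \( R=\{1\} \) whenever \( S=A \) so that case can be discarded, and then pass to ceilings and maximize. The handling of the boundary case \( S=A \), which you correctly flag as the one step needing care, matches the paper's treatment exactly.
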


\begin{proof}
By Theorems~\ref{Decomposition of B} and \ref{Char for partition in groups}, \( \rho \) is the smallest positive integer \( k \) satisfying
\[ 
k|S| - (k-1)|A| \leq  | B \setminus R|
\]
for all pairs of subsets \( S \subseteq A \) and \( R \subseteq B \cup \{ 1 \} \) such that \( SR = S \).  A simple calculation shows that the above inequality holds if and only if 
\begin{equation} \label{middle}
k(|A| - |S|) \geq |R \setminus \{ 1 \}|,
\end{equation}
since \( |A| = |B| \) and \( |B \setminus R| = |B \setminus (R \setminus \{ 1 \})| = |B| - |R \setminus \{ 1 \}| \). Now, regarding the sets \( S \) and \( R \), we can have \( S = A \) only if \( R = \{ 1 \}  \). This is due to the equation \( SR = S \) and the assumption \( \rho < \infty \) (which means \( Ax \neq A \) for all \( x \in B \)). Inequality (\ref{middle}) clearly holds when \( R = \{ 1 \} \), so we now know that \( \rho \) is the smallest positive integer \( k \) satisfying (\ref{middle}) for all subsets \( S \subseteq A \) and \( R \subseteq B \cup \{ 1 \} \) such that \( SR = S \) and \( S \neq A \). Dividing by \( |A| - |S| \), we find that \( \rho \) is the smallest positive integer \( k \) satisfying
\[
k \geq \frac{|R \setminus \! \{ 1 \}|}{|A| - |S|}
\]
for all such \( S \) and \( R \).  The conclusion follows.
\end{proof}

Below, we show that the decomposition of \( A \) and \( B \) described in Theorem~\ref{Structure of PMG} provides information about the right partition number.

\begin{theorem}  \label{Estimate right partition}
Let \( A \) and \( B \) be nonempty finite subsets of an abelian group \( G \), with \( |A| = |B| \) and \( 1 \notin B \). Let \( \rho \) be the right partition number of \( (A,B) \).  Assume that the pair \( (A,B) \) is unmatchable, i.e., \( \delta(A,B) > 0\), and consider any decomposition \( A = S \cup Y \), \( B = R \cup Z \) as in Theorem~\ref{Structure of PMG}, with \( |Y| < |R| -  \ell\) for some nonnegative integer \( \ell \). Also, assume that \( \rho < \infty \). Then \( Y \) is nonempty and 
\[ 
\rho \geq \left\lceil \frac{|R|}{|Y|} \right\rceil > \frac{|R|}{|R| - \ell}.
\]
\end{theorem}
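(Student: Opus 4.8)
The plan is to obtain both parts of the conclusion by applying the formula for $\rho$ from Theorem~\ref{Formula for right partition} to the particular pair $(S,R)$ furnished by the decomposition in Theorem~\ref{Structure of PMG}, after first checking that this pair is one of the pairs over which the maximum in that formula is taken. The decomposition already supplies everything needed: $S$ is a nonempty union of cosets of $\langle R\rangle$, so $SR = S$ by Proposition~\ref{UnionOfCosets}; moreover $R\subseteq B$ is nonempty and, since $1\notin B$, we have $1\notin R$.

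First I would establish that $Y$ is nonempty. Suppose toward a contradiction that $Y=\emptyset$; then $S=A$, so the relation $SR=S$ becomes $AR=A$. Choosing any $x\in R\subseteq B$ gives $Ax\subseteq AR=A$, and since $|Ax|=|A|$ this forces $Ax=A$. But the assumption $\rho<\infty$ says exactly that $Ax\neq A$ for every $x\in B$, a contradiction. Hence $Y\neq\emptyset$, which also gives $|A|-|S|=|Y|>0$ and $S\neq A$. I expect this to be the one genuinely nontrivial step, and its subtlety is that one must invoke $\rho<\infty$ (equivalently, $Ax\neq A$ for all $x\in B$) rather than anything from the structure theorem to rule out $S=A$.

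Next I would feed $(S,R)$ into Theorem~\ref{Formula for right partition}. Since $S\subseteq A$, $R\subseteq B\subseteq B\cup\{1\}$, $SR=S$, and $S\neq A$, the pair $(S,R)$ is admissible for the maximum defining $\rho$. Because $1\notin B$ we have $R\setminus\{1\}=R$, and because $A=S\cup Y$ is a disjoint union we have $|A|-|S|=|Y|$. The quantity attached to $(S,R)$ is therefore $\lceil |R|/|Y|\rceil$, and as $\rho$ is the maximum of all such quantities, we conclude $\rho\geq\lceil |R|/|Y|\rceil$.

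Finally, for the strict inequality I would combine the defining bound $|Y|<|R|-\ell$ with $|Y|>0$. These together force $|R|-\ell>0$ and $|R|>0$, so dividing the inequality $0<|Y|<|R|-\ell$ into $|R|$ yields $|R|/|Y|>|R|/(|R|-\ell)$; using $\lceil |R|/|Y|\rceil\geq |R|/|Y|$ then closes the chain $\rho\geq\lceil |R|/|Y|\rceil>|R|/(|R|-\ell)$. Everything after the nonemptiness of $Y$ is a direct substitution into Theorem~\ref{Formula for right partition} followed by an elementary comparison of fractions, so no further obstacle is anticipated.
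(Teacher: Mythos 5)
Your proposal is correct and follows essentially the same route as the paper's proof: rule out $Y=\emptyset$ via the $\rho<\infty$ hypothesis, feed the pair $(S,R)$ (with $SR=S$ from Proposition~\ref{UnionOfCosets} and $1\notin R$) into Theorem~\ref{Formula for right partition}, and finish with the comparison $0<|Y|<|R|-\ell$. The only cosmetic difference is that you derive $Ax=A$ from $AR=A$ and a cardinality count, while the paper reads it off from $A=S$ being a union of cosets of $\langle R\rangle$; both are fine.
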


\begin{proof}
If \( Y \) were empty, then \( A \) would equal \( S \), which is a nonempty union of cosets of \( \langle R \rangle \). Hence we would have \(Ax = A \) for all \( x \in R \), implying \( \rho = \infty \), contrary to assumption. Thus \( |A| - |S| \) is nonzero. We also have \( SR = S \), by Proposition~\ref{UnionOfCosets}, and \( 1 \notin R \) since \( 1 \notin B \).   Therefore, by Theorem~\ref{Formula for right partition}, 
\[
\rho \geq \left\lceil \frac{|R|}{|A| - |S|} \right\rceil.
\]
We now simply observe that \( |A| - |S| = |Y| < |R| - \ell \). 
\end{proof}

\begin{example}
Consider again the setup in Example~\ref{Structure Example}, consisting of the sets
\[ 
A = \{ \overline{0}, \overline{1}, \overline{2}, \overline{4}, \overline{6}, \overline{8}, \overline{10}, \overline{11} \},
\qquad
B = \{ \overline{1}, \overline{2}, \overline{3}, \overline{4}, \overline{6}, \overline{8}, \overline{10}, \overline{11} \}
\]
in the additive group \( G = \mathbb{Z}/12\mathbb{Z} \).  As we saw, for the subset \( R = \{ \overline{2}, \overline{4}, \overline{6}, \overline{8}, \overline{10} \} \) of \( B \), we have the decomposition
\[
A = \langle R \rangle \cup Y, \qquad B = R \cup Z,
\]
where \( Y = \{ \overline{1}, \overline{11} \} \), \( Z = \{ \overline{1}, \overline{3}, \overline{11} \}\), and \( |Y| < |R| - 2\).  Now, by Theorem~\ref{Estimate right partition}, the right partition number \( \rho \) of \( (A,B) \) satisfies
\[
\rho \geq \left\lceil \frac{|R|}{|Y|} \right\rceil = \left\lceil \frac{5}{2} \right\rceil = 3.
\]  
In fact, \( \rho = 3 \).  To see this, observe that $B$ is the union of the (pairwise disjoint) ranges of the following three partial matchings: 
\begin{align*}
f &: \overline{0}\mapsto \overline{3}, \, \overline{1}\mapsto \overline{2}, \, \overline{2}\mapsto \overline{1}, \, \overline{10}\mapsto \overline{11}, \, \overline{11}\mapsto \overline{10} \\
g &: \overline{1}\mapsto \overline{6}, \, \overline{11}\mapsto \overline{4} \\
h &: \overline{1}\mapsto \overline{8}.
\end{align*}
Note that the above partition of \( B \) into three right-admissible sets is not unique. 
\end{example}

Next we turn to the decomposition of \( A \) into left-admissible sets. As usual, we assume that \( A \) and \( B \) are nonempty finite subsets of an abelian group, with \( |A| = |B| \) and \( 1 \notin B \). These conditions guarantee that \( xB \not\subseteq A \) for each \( x \in A \).  Hence, for all nonempty \( S \subseteq  A \), the set \( U_S = \{ b \in B : Sb \subseteq A \} \) cannot equal \( B \).  With this in mind, we define the {\em left partition number} of the pair \( (A,B) \) to be the maximum value of 
\[
\left\lceil \frac{|S|}{|B| - |U_S|} \right\rceil,
\]
where \( S \) ranges over all nonempty subsets of \( A \).  We denote the left partition number by \( \lambda = \lambda(A,B) \).

\begin{theorem}\label{lpn}
Let \( A \) and \( B \) be nonempty finite subsets of an abelian group \( G \), with \( |A| = |B| \) and \( 1 \notin B \).  Let \( \lambda \) be the left partition number of \( (A,B) \). Then \( A \) can be written as a union of \( \lambda \), and no fewer, pairwise disjoint left-admissible subsets. 
\end{theorem}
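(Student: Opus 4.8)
The plan is to run the same argument that established Theorem~\ref{Decomposition of B}, but with the roles reversed: here I invoke Theorem~\ref{Partition A} (the decomposition criterion for $A$) in place of Theorem~\ref{Partition B}. The first thing I would note is that $\lambda$ is a well-defined positive integer. Indeed, the discussion preceding the theorem records that $1 \notin B$ and $|A|=|B|$ force $xB \not\subseteq A$ for every $x \in A$; since $U_S \subseteq U_{\{x\}}$ whenever $x \in S$, this gives $U_S \subsetneq B$, so the denominator $|B| - |U_S| = |\Delta(S)|$ is at least $1$ for every nonempty $S \subseteq A$. The maximum defining $\lambda$ is therefore taken over a finite, nonempty collection of genuine positive rationals, and $\lambda \geq 1$.

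For the existence half, I would verify that $(A,B)$ satisfies the hypothesis of Theorem~\ref{Partition A} with $k = \lambda$, i.e. that $|S| \leq \lambda\,|\Delta(S)|$ for every $S \subseteq A$. The empty set gives $0 \leq 0$ and needs no comment. For nonempty $S$, the defining maximum yields $\lambda \geq \lceil |S|/(|B|-|U_S|)\rceil \geq |S|/|\Delta(S)|$, and since $|\Delta(S)| > 0$ this rearranges to $|S| \leq \lambda\,|\Delta(S)|$. Theorem~\ref{Partition A} then produces a partition of $A$ into $\lambda$ pairwise disjoint left-admissible subsets.

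For minimality, I would start from any partition of $A$ into $k$ pairwise disjoint left-admissible sets and feed it back through Theorem~\ref{Partition A}, obtaining $|S| \leq k\,|\Delta(S)|$ for all $S \subseteq A$. Restricting to nonempty $S$ and dividing by the positive quantity $|\Delta(S)| = |B|-|U_S|$ gives $k \geq |S|/(|B|-|U_S|)$; because $k$ is an integer, this upgrades to $k \geq \lceil |S|/(|B|-|U_S|)\rceil$. Taking the maximum over all nonempty $S$ yields $k \geq \lambda$, so no decomposition uses fewer than $\lambda$ sets.

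I expect no serious obstacle here: the result is the left-handed mirror of Theorem~\ref{Decomposition of B}, and the argument is essentially a bookkeeping exercise in the identity $\Delta(S) = B \setminus U_S$ together with the monotone passage from a real inequality to its ceiling. The only point demanding genuine care is confirming that every denominator $|B|-|U_S|$ is strictly positive for nonempty $S$ — without which neither the definition of $\lambda$ nor the division steps would make sense — and this is exactly what the standing hypothesis $1 \notin B$ guarantees.
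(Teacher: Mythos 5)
Your proposal is correct and follows essentially the same route as the paper's own proof: both halves rest on Theorem~\ref{Partition A}, the identity \( \Delta(S) = B \setminus U_S \), and the integer-ceiling upgrade, with the well-definedness of \( \lambda \) (i.e.\ \( U_S \subsetneq B \) for nonempty \( S \)) supplied by the discussion preceding the theorem just as you note.
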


\begin{proof}
Let \( S \) be a nonempty subset of \( A \), and let \(  U_S = \{ b \in B : Sb \subseteq A \} \).  Recall that \( B \setminus U_S = \Delta(S) \) and \( U_S \neq B \).  We compute 
\begin{align*}
|S| = \frac{|S|}{|B| - |U_S|}\left(|B| - |U_S|\right) &= \frac{|S|}{|B| - |U_S|}|B \setminus U_S| \\
&= \frac{|S|}{|B| - |U_S|}|\Delta(S)| \leq \lambda |\Delta(S)|.
\end{align*}
Thus \( |S| \leq \lambda |\Delta(S)| \), and observe that this inequality also holds when \( S \) is the empty set.  Therefore, by Theorem~\ref{Partition A}, \( A \) can be written as a union of \( \lambda \) pairwise disjoint left-admissible sets.

For the minimality assertion, suppose that \( A \) can be partitioned into \( k \) left-admissible sets.  Then, by Theorem~\ref{Partition A}, for each nonempty subset \( S \) of \( A \), 
\[ 
|S| \leq  k| \Delta(S)|. 
\]
Again using the fact that \( \Delta(S) = B \setminus U_S \), we rewrite this inequality as 
\[
|S| \leq k(|B| - |U_S|).
\]
Since \( U_S \subsetneq B \), we can divide by \( |B| - |U_S| \), to obtain   
\[
k \geq \frac{|S|}{|B| - |U_S|}.
\]
Taking the ceiling of both sides, we find, by the definition of \( \lambda \), that \( k \geq \lambda \). 
\end{proof}

One would like to have a result similar to Theorem~\ref{Formula for right partition} for the left partition number.  Note, however, that Theorem~\ref{Formula for right partition} depends on Theorem~\ref{Equivalence for groups}, and the proof given for (ii) implies (i) in the latter theorem does not extend to the case where \( \alpha < 1 \). A new idea will likely be needed, but we can at least provide the following lower bound for \( \lambda \).

\begin{proposition}
Let \( A \) and \( B \) be nonempty finite subsets of an abelian group \( G \), with \( |A| = |B| \) and \( 1 \notin B \).  Let \( \lambda \) be the left partition number of \( (A,B) \). Then \( \lambda \) is bounded below by the maximum value of 
\[
\left\lceil \frac{|S|}{|B| - |R\setminus\{ 1 \}|} \right\rceil,
\]
where \( S \) and \( R \) range over all nonempty subsets of \( A \) and \( B \cup \{ 1 \} \), respectively, such that \( SR = S \).
\end{proposition}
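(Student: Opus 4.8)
The plan is to prove the bound by a direct termwise comparison between the quantity in the statement and the quantity defining $\lambda$. The whole argument hinges on one containment: for any pair $(S,R)$ of nonempty subsets with $S \subseteq A$, $R \subseteq B \cup \{1\}$, and $SR = S$, the set $R \setminus \{1\}$ sits inside $U_S$. This makes $|R \setminus \{1\}| \leq |U_S|$, so the fraction appearing in the proposition is no larger than the corresponding fraction in the definition of $\lambda$. Because the reverse containment can fail (a given $U_S$ need not itself satisfy $S(U_S \cup \{1\}) = S$), this comparison yields only a lower bound rather than an exact formula.

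First I would fix such a pair $(S,R)$ and establish $R \setminus \{1\} \subseteq U_S$. Given $x \in R \setminus \{1\}$, we have $x \in B$ (since $x \neq 1$) and $Sx \subseteq SR = S \subseteq A$, so $x \in U_S$ by the definition $U_S = \{ b \in B : Sb \subseteq A \}$. Passing to cardinalities gives $|R \setminus \{1\}| \leq |U_S|$. Next I would invoke the fact, recorded just before the definition of $\lambda$, that $1 \notin B$ together with $|A| = |B|$ forces $U_S \neq B$ for every nonempty $S$; thus $|B| - |U_S| > 0$, and the cardinality bound upgrades to $|B| - |R \setminus \{1\}| \geq |B| - |U_S| > 0$. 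With both denominators positive, it follows that $\frac{|S|}{|B| - |R \setminus \{1\}|} \leq \frac{|S|}{|B| - |U_S|}$. Applying the monotone ceiling function and then the definition of $\lambda$ as the maximum of $\left\lceil |S|/(|B| - |U_S|) \right\rceil$ over nonempty $S \subseteq A$ gives $\left\lceil |S|/(|B| - |R \setminus \{1\}|) \right\rceil \leq \lambda$ for this pair. Since the pair was arbitrary, taking the maximum over all admissible $(S,R)$ completes the proof.

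I expect essentially no obstacle in this direction: it is the \emph{easy} comparison, and the only subtlety is recalling that $U_S \neq B$ guarantees a positive denominator so that the ceiling inequality is legitimate. The genuinely hard part, as the surrounding remark anticipates, would be the reverse inequality needed to turn this into an exact formula; that is precisely what fails here, since recovering $\lambda$ exactly would require the implication (ii)$\Rightarrow$(i) of Theorem~\ref{Equivalence for groups} in the regime $\alpha < 1$, which its proof does not cover.
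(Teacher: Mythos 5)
Your proposal is correct and matches the paper's own argument: both rest on the containment \( R\setminus\{1\}\subseteq U_S \subsetneq B \), followed by the monotone comparison of the two fractions and the ceiling function. The extra care you take about positive denominators is a sensible (if routine) elaboration of the same proof.
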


\begin{proof}
Observe that for any pair of sets \( S, R \) as in the statement, we have
\[
R \setminus \{ 1 \} \subseteq U_S \subsetneq B,
\]
hence
\[
\left\lceil \frac{|S|}{|B| - |U_S|} \right\rceil \geq \left\lceil \frac{|S|}{|B| - |R \setminus \{ 1 \}|} \right\rceil.
\]
The conclusion now follows from the definition of $\lambda$.
\end{proof}

\section{Concluding remarks and future directions}\label{FutureDirections}
\begin{enumerate}
\item Extending the partial matching theory in this paper to the non-abelian setting would be a natural next step.  This will likely require identifying the correct noncommutative replacements for the stabilizing condition
\(SR=S\) and for the coset-union structural statements that underlie our obstruction theory.  One expects that left/right stabilizers and two-sided product-set growth (in the spirit of Kemperman-type transforms and isoperimetric methods) will play a central role. 

\item While \(\rho(A,B)\) admits a sharp description in terms of defect, the parallel theory for the left partition number \(\lambda(A,B)\) remains incomplete. It would be particularly interesting to obtain a characterization of \(\lambda(A,B)\) by means of a structural theorem or a defect-type criterion.  New ideas seem necessary here, as the techniques used for \(\rho(A,B)\) do not directly transfer.

\item Theorems~\ref{Structure of PMG} and \ref{Existence partial} motivate extremal questions:
for fixed \(|G|\), \(|A|=|B|=n\), and defect \(d\), what is the maximum number of pairs \((A,B)\)
with \(\delta(A,B)=d\), and what configurations asymptotically attain this?
In cyclic groups one can ask for sharp counts or sharp asymptotics of pairs with a given defect.
\end{enumerate}

\medskip

\noindent \textbf{Declarations} 

\medskip

\noindent \textbf{\small Conflict of interest:} {\small The authors declare that they have no conflict of interest.}
\noindent \textbf{\small Data availability statement:} {\small Data sharing is not applicable to this article as no datasets were generated or analyzed during the current study.}

\enlargethispage{\baselineskip}

\end{document}